\theoremstyle{plain}
\newcommand{\var}{\operatorname{var}}
\newcommand{\norm}[1]{\|#1\|}
\newcommand{\R}{\mathbb{R}}
\newcommand{\C}{\mathbb{C}}
\newcommand{\N}{\mathbb{N}}
\newcommand{\tr}{\operatorname{tr}}
\newcommand{\A}{\mathcal{A}}
\newcommand{\cumuf}{\kappa^{\operatorname{free}}}
\newcommand{\NC}{\operatorname{NC}}
\newcommand{\homo}{\operatorname{hom}}
\newcommand{\Homo}{\operatorname{Hom}}
\title[Graphon-Theoretic Approach to CLT for $\epsilon$-Independence]{Graphon-Theoretic Approach to Central Limit Theorems for $\epsilon$-Independence}
\author[G. Cébron and P.\ O.\ Santos \and P.\ Youssef]{%
        Guillaume Cébron \and
        Patrick Oliveira Santos \and 
        Pierre Youssef
        }
\address{Guillaume Cébron. Institut de Mathématiques de Toulouse; UMR5219; Université de Toulouse; CNRS; UPS, F-31062 Toulouse, France}
\email{guillaume.cebron@math.univ-toulouse.fr}
\address{Patrick Oliveira Santos. Division of Science, NYU Abu Dhabi, Abu Dhabi, UAE.}
\email{po2150@nyu.edu}
\address{Pierre Youssef. Division of Science, NYU Abu Dhabi, Abu Dhabi, UAE \& Courant Institute of Mathematical Sciences, New York University, New York, USA.}
\email{yp27@nyu.edu}
\date{\today}
\begin{document}
\newtheorem{theorem}{Theorem}[section]

\newtheorem{corollary}[theorem]{Corollary}
\newtheorem{lemma}[theorem]{Lemma}
\newtheorem{conjecture}[theorem]{Conjecture}
\newtheorem{proposition}[theorem]{Proposition}

\theoremstyle{definition}
\newtheorem{example}[theorem]{Example}
\newtheorem{definition}[theorem]{Definition}
\newtheorem{remark}[theorem]{Remark}

\newtheorem*{model}{Model}

\maketitle
\begin{abstract}
We establish a central limit theorem for the sum of $\epsilon$-independent random variables, extending both the classical and free probability setting. Central to our approach is the use of graphon limits to characterize the limiting distribution, which depends on the asymptotic structure of the underlying graphs governing $\epsilon$-independence. This framework yields a range of exotic limit laws, interpolating between free and classical cases and allowing for mixtures such as free and classical convolutions of the semi-circle and Gaussian distributions. We provide a complete characterization of the limiting law, captured as the distribution of an operator on the full Fock space. We extend our main result to the multivariate setting as well as the non-identically distributed case. 
The proof provides insights into the combinatorial structure of $\epsilon$-independence, shedding light on a natural connection between the graph of crossings of the partitions involved and the graphon limit of the graphs of $\epsilon$-independence.
\end{abstract}
\section{Introduction}

In the landscape of noncommutative probability, classical and free independence have long stood as the only universal notions that satisfy commutativity and independence from constants. Both forms of independence manifest in distinct product constructions — tensor and reduced free product — and provide the foundation for much of probability theory in operator algebras. The quest for more nuanced forms of independence, particularly mixtures of classical and free structures, presents an enticing and challenging extension. 
The seminal work of M\l{}otkowski \cite{mlotkowski2004lambdafree} first introduced the concept of $\Lambda$-freeness, providing a mixture of classical and free independence. While this opened new doors, it left aspects of the combinatorial structure and analytical properties of such mixed systems ripe for exploration. Later, Speicher and Wysoczański \cite{speicherjanusz2016mixture} (see also \cite{ebrahimifard2017}) offered a combinatorial framework for $\Lambda$-freeness, renaming it $\epsilon$-independence, elaborating the corresponding moment-cumulant formulas. Note also the operator algebraic study of this particular independence by Caspers and
Fima \cite{caspers2017graph}, and the various random matrix models which asymptotically yield this independence \cite{morampudi2019many,charlesworth2021matrix,magee2308strongly,charlesworth2024random}. In view of the close relationship with graph products of groups developed by Green \cite{green1990graph}, $\epsilon$-independence is also called graph independence, or right-angled probability, in the literature.

Central limit theorems (CLTs) have played a crucial role in both classical and free probability theory. The classical CLT reveals the universality of the Gaussian distribution as it appears as the limit of normalized sums of classically independent random variables. In the 1980s, Voiculescu \cite{voiculescu1985symmetries} established an analogous result in the free probability context, known as the Free CLT, where the normalized sum of freely independent self-adjoint random variables converges to a semi-circular distribution, which is central to free probability and its applications in random matrix theory.

Between these two regimes, where variables are neither fully independent nor free, a natural question is to understand how limit theorems manifest in this mixed setting. Moreover, it is interesting to capture the universal limiting distribution and understand the structural dependencies between the variables that result in a particular limiting measure. While both classical and free CLTs have been well-studied, CLTs within $\epsilon$-independence remain largely uncharted. To the best of our knowledge, existing results have typically focused on particular cases, such as the distribution of $q$-Gaussians \cite{boejko1997}. These limited cases hint at deeper structures but lack a comprehensive framework for understanding the general behavior of sums of $\epsilon$-independent variables. 
The goal of this paper is to address this and establish a general central limit theorem in the setting of $\epsilon$-independence. 
To achieve this, we encode the evolution of $\epsilon$-independence through graphon convergence \cite{lovasz2012largenetworks} and show that the normalized sum of $\epsilon$-independent converges to a universal limit that interpolates between the classical Gaussian and semi-circular distributions. The latter interpolation is explicitly encoded using the graphon limit of the sequence of graphs of $\epsilon$-independence. 

Interestingly, graphon limits have been used in the context of random matrix theory to encode the evolution of variance profiles and a corresponding limit theorem for the empirical spectral distribution of non-homogeneous Wigner matrices established accordingly~\cite{zhu2019}. 
It is also worth noting that other notions of independence, such as mixtures of free, monotone, and boolean independence, have been explored, with a corresponding central limit theorem established (see \cite{ARIZMENDI2025110712,jekel2024general} and references therein). 

Recall that a graphon consists of a measurable bounded symmetric function $w:[0,1]^2 \to [0,1]$. Given a graph $g$ on $n$ vertices, one can naturally associate a graphon $w_g$ defined by $w_g(x,y)=A_g(i,j)$  for $i,j \in [n]$ and
\begin{align*}
    x \in \left[\frac{i-1}{n},\frac{i}{n}\right),\quad y \in \left[\frac{j-1}{n},\frac{j}{n}\right),
\end{align*}
where $A_g$ is the adjacency matrix of $g$. Given a graphon $w$ and a graph $f=([n],E)$ in $n$ nodes, we define
\begin{align*}
    \rho(f,w):=\int_{[0,1]^n} \prod_{(i,j) \in E} w(x_i,x_j) \, \text{d}x.
\end{align*}
When the graphon $w$ is generated by a graph $g$, i.e., $w=w_g$, the above definition reduces to 
\begin{align*}
    \rho(f,g):=\rho(f,w_g)=\frac{\homo(f,g)}{|V(g)|^{|V(f)|}},
\end{align*}
where $\homo(f,g)$ is the number of homomorphism from $f$ to $g$, namely, $\rho(f,g)$ is the probability that a uniformly chosen random function from $V(f)$ into $V(g)$  is an homomorphism. 
A sequence of simple graphs $g_n$ in $n$ nodes converges to a graphon $w$ if, for every simple finite graph $f$, we have
\begin{align*}
    \rho(f,g_n) \to \rho(f,w).
\end{align*}

Let $(\A,\tau)$ be a unital noncommutative probability space equipped with a faithful tracial state $\tau$ \cite{nica2006lectures} i.e. $\tau:\, \A\to \C$ is a linear functional satisfying $\tau(1)=1$ and $\tau(a^*a)\geq 0$ for every $a\in \A$. Within this framework, classical independence can be generalized as follows: a family $\{\A_i\}_{i\in \N}$ of unital subalgebras of $\A$ is called independent if the algebras commute (meaning $ab=ba$ whenever $a\in \A_i$ and $b\in A_j$ with $i\neq j$), and if for any $m\in \N$ and any sequence of elements $a_1\in \A_{i_1}, \ldots, a_m\in \A_{i_m}$, we have 
$\tau(a_1\ldots a_m)=0$ whenever $\tau(a_1)=\cdots=\tau(a_m)=0$ and $i_k\neq i_l$ for $k\neq l$. 

In contrast, Voiculescu’s concept of freeness replaces commutativity with a different type of independence. A family $\{\A_i\}_{i\in \N}$ of unital subalgebras of $\A$ is called free if, for any sequence of elements $a_1\in \A_{i_1}, \ldots, a_m\in \A_{i_m}$ with $\tau(a_1)=\cdots=\tau(a_m)=0$ and $i_1\neq i_2\neq \cdots\neq i_m$, we have $\tau(a_1\ldots a_m)=0$. This form of independence, unlike the classical case, is not associated with the tensor product but rather with the unital free product of algebras.

As we previously mentioned, the concept of $\epsilon$-independence provides a mixture of freeness and classical independence. Formally, given a loopless graph $g_n=([n],E)$, we say that subalgebras $\A_1,\ldots,\A_n \subset \A$ are $g_n$-independent if
\begin{enumerate}
    \item For every $(i,j) \in E$, the subalgebras $\A_i,\A_j$ are classical independent (in particular, they commute);
    \item For every $k \ge 1$, $i \in [n]^k$ such that for all $j_1<j_2$ with $i_{j_1}=i_{j_2}$, there exists $j_1<j_3<j_2$ with $(i_{j_1},i_{j_3}) \notin E$, then for any centered random variables $a_j \in \A_{i_j}$, $j \in [k]$, we have
    \begin{align*}
        \tau(a_1\cdots a_k)=0.
    \end{align*}
\end{enumerate}

Note that when the graph $g_n$ is complete, we recover classical independence, while when the graph $g_n$ is edgeless, we recover the notion of freeness. 

We say that
a sequence of (self-adjoint) variables $(a_n)_{n\in \N}$ in $\A$  converge in distribution to $a\in \A$ if, for every $p\in \N$, $\tau(a_n^p) \to \tau(a^p)$ as $n\to \infty$. With these notations, the classical (resp. free) CLT states that given $a, a_1,\ldots,a_n \in \A$ classically independent (resp. free) and identically distributed centered random variables, their normalized sums
\begin{align}\label{eq: normalized sums}
    S_n:=\frac{1}{\sqrt{n}}\sum_{k \in [n]}a_k
\end{align}
converge in distribution to a Gaussian (resp. semi-circle) random variable with mean zero and variance equal to the variance of $a$. 
The combinatorial proof of those results involves the understanding of pair partitions. While all pairings play a role in the classical CLT, only noncrossing pairings contribute to the free CLT. For a pair partition $\pi\in P_2(2p)$, we define its intersection graph $f_\pi$ whose vertex set consists of the blocks of $\pi$, and there exists an edge between two blocks if they cross. Noncrossing pair partitions are those whose intersection graph consists of isolated vertices (see Section~\ref{sec: fock construction} for more details).

With all definitions in hand, we can now state the main result of this paper, which encompasses both classical and free CLT as particular instances of a general central limit theorem for $\epsilon$-independent variables.

\begin{theorem}\label{theorem: 1}
    Let $(g_n)_{n \ge 1}$ be a sequence of finite simple graphs where $V(g_n)=[n]$. Assume that $g_n \to w$ in the graphon sense. Let $a_1^{(n)},\ldots,a_n^{(n)} \in \A$ be $g_n$-independent identically distributed centered (self-adjoint) random variables with variance one, for every $n \ge 1$. Then 
    \begin{align*}
        S_n=\frac{1}{\sqrt{n}}\sum_{k \in [n]}a_k^{(n)}
    \end{align*}
    converges in distribution to a law $\mu_w$ that depends only on $w$. Moreover, its odd moments vanish, whereas, for any $p\in \N$, we have
    \begin{align*}
        \int x^{2p}\, \text{d}\mu_w=\sum_{\pi \in P_2(2p)}\rho(f_\pi,w).
    \end{align*}
\end{theorem}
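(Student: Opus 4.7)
The plan is to proceed by the method of moments. Expanding
\begin{equation*}
\tau(S_n^p) \;=\; \frac{1}{n^{p/2}} \sum_{(i_1,\ldots,i_p) \in [n]^p} \tau\bigl(a_{i_1}^{(n)} \cdots a_{i_p}^{(n)}\bigr),
\end{equation*}
I would group the summands according to the kernel partition $\sigma=\ker(i)\in P(p)$, defined by $j\sim_{\sigma} k$ iff $i_j=i_k$. Using the $\epsilon$-moment-cumulant formula of Speicher--Wysoczański together with the centering hypothesis, any $\sigma$ containing a singleton block forces the corresponding moment to vanish, since the isolated position contributes a factor $\kappa_1=0$. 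Any partition with $b$ blocks and no singletons satisfies $b\le p/2$, and the number of compatible index vectors is $\Theta(n^b)$; combined with the prefactor $n^{-p/2}$, only $b=p/2$ (i.e.\ pair partitions, which requires $p$ to be even) gives a non-vanishing contribution. In particular, odd moments of $S_n$ tend to $0$.

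For even $p=2p'$, the heart of the argument is the following combinatorial claim: for $\pi\in P_2(2p')$ and an index vector $i$ with $\ker(i)=\pi$ (so each block of $\pi$ receives a distinct label in $[n]$), one has $\tau\bigl(a_{i_1}^{(n)}\cdots a_{i_{2p'}}^{(n)}\bigr)=1$ whenever the labelling of blocks of $\pi$ by indices is a graph homomorphism $f_\pi\to g_n$, and equals $0$ otherwise. To see this, apply the $\epsilon$-moment-cumulant formula once more: only partitions $\sigma$ whose blocks are monochromatic in $i$ contribute, forcing $\sigma$ to refine $\pi$; since $\pi$ has size-two blocks, any strict refinement produces singletons and thus vanishes by the centering, leaving only $\sigma=\pi$. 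The surviving term $\kappa_\pi^{\epsilon}$ appears in the sum precisely when $\pi$ is $\epsilon$-noncrossing with respect to the labelling, which unpacks to the condition that every crossing pair of blocks of $\pi$ be assigned indices connected by an edge of $g_n$, i.e.\ the homomorphism condition $f_\pi\to g_n$. When it holds, the value equals $\prod_{B\in\pi}\kappa_2(a,a)=1$ by the variance-one assumption.

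Combining these two steps, the contribution of $\pi\in P_2(2p')$ to $\tau(S_n^{2p'})$ equals $n^{-p'}$ times the number of injective graph homomorphisms from $f_\pi$ into $g_n$. The difference between injective and arbitrary homomorphism counts is $O(n^{p'-1})$, hence
\begin{equation*}
\tau(S_n^{2p'}) \;=\; \sum_{\pi\in P_2(2p')}\rho(f_\pi,g_n) + o(1) \;\longrightarrow\; \sum_{\pi\in P_2(2p')}\rho(f_\pi,w)
\end{equation*}
by the graphon convergence $g_n\to w$. Since these moments are bounded by $(2p'-1)!!$, Carleman's criterion identifies a unique probability measure $\mu_w$ realising them, depending only on $w$; an explicit operator on the full Fock space achieving these moments is provided in Section~\ref{sec: fock construction}. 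I expect the main obstacle to be the combinatorial claim of the second paragraph, namely the identification of the $\epsilon$-noncrossing condition produced by the moment-cumulant formula with the graph homomorphism condition $f_\pi\to g_n$; this is the technical core of the proof and precisely the point at which the graphon parameter $w$ enters the limit law.
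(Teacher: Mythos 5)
Your proposal is correct and follows essentially the same route as the paper: expand moments, group indices by kernel partition, use the Speicher--Wysocza\'nski moment-cumulant formula to kill singletons via centering and to reduce surviving contributions to pair partitions, then identify $\ker(i)=\pi\in\NC(g_n,i)$ with injective homomorphisms $f_\pi\to g_n$, pass from injective to ordinary homomorphism densities, and invoke graphon convergence. The only cosmetic difference is that you justify existence of $\mu_w$ via Carleman (moments bounded by $(2p'-1)!!$), whereas the paper defers existence to the Fock-space realization of Section~\ref{sec: fock construction}; both are valid, and the combinatorial claim you flag as the potential obstacle is exactly what the paper verifies in one paragraph from the defining condition~\eqref{eq: def-non-crossing eps partitions} of $\NC(g_n,i)$.
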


In the setting of the classical CLT, i.e., when the $g_n$'s are complete graphs, the corresponding graphon limit is the constant graphon $w(x,y)=1$ for all $x\neq y$. One can readily see that Theorem~\ref{theorem: 1} applied in this setting states that the even moments of the limiting law are given by the number of pair partitions coinciding with that of the Gaussian distribution. 
Now, in the setting of the free CLT, i.e., when the $g_n$'s are edgeless graphs, the corresponding graphon limit is the constant graphon $w(x,y)=0$ for all $x\neq y$. It is easy to see that Theorem~\ref{theorem: 1} applied in this setting recovers the semi-circle law as the limiting distribution. In addition to recovering those classical results, Theorem~\ref{theorem: 1} asserts the stability of the classical CLT (resp. free CLT) when the $g_n$'s are sufficiently dense (resp. sparse). We refer to Section~\ref{sec: examples} and Examples~\ref{ex: semi-circle} and \ref{ex: gaussian} for precise statements.

The class of universal laws appearing at the limit in Theorem~\ref{theorem: 1} is quite vast, and closed under classical and free convolution (see Proposition~\ref{proposition: mu_w is closed under cla/free}). The measures $\mu_w$ are universal objects, always arising as the limiting measures in CLTs for $\epsilon$-independent variables. This universality stems from the fact that every graphon $w$ can be realized as the limit of some sequence of (random) graphs (see \cite[Corollary 2.6]{lovasz2006limits}), making $\mu_w$ the limiting measure associated with $\epsilon$-independent random variables defined by those (random) graphs. The breadth of this class reflects the diversity of graphon limits, encompassing an extensive and exotic variety of distributions. We refer to Section~\ref{sec: examples} for examples. 
One particular example which represents a relatively well-studied class of distributions interpolating between the Gaussian and semi-circle distributions is the class of $q$-Gaussians for $q\in [0,1]$ \cite{boejko1997}, whose $2p$-th moments are given by 
\begin{equation}\label{eq: q-gaussian def}
    \sum_{\pi \in P_2(2p)}q^{\vert E(f_\pi)\vert}.
\end{equation}
Theorem~\ref{theorem: 1} recovers these distributions and the corresponding central limit theorem \cite[Corollary 3]{mlotkowski2004lambdafree}, as they coincide with $\mu_w$ for the constant graphon $w(x,y)=q$ for all $x\neq y$. 
It is worth noting that $q$-Gaussians represent in some sense an extremal case of Theorem~\ref{theorem: 1} as every limiting law $\mu_w$ appearing in Theorem~\ref{theorem: 1} is stochastically dominated (in terms of moments) by some $q$-Gaussian distribution. This is formalized in the following proposition. 

\begin{proposition}\label{proposition: bound on the norm of the limit}
    Let $w$ be a graphon and $\mu_w$ be the measure such that its odd moments vanish, and for any integer $p\ge 1$, we have
    \begin{align*}
        \int x^{2p}\, \text{d}\mu_w=\sum_{\pi \in P_2(2p)}\rho(f_\pi,w).
    \end{align*}
Denote
    \begin{align*}
        q:=\sup_{(x,y)\in [0,1]^2}w(x,y),
    \end{align*}
and let $\mu_q$ be the corresponding $q$-Gaussian law \eqref{eq: q-gaussian def}. Then, for all integers $p\ge 1$, we have that  
\begin{align*}
        \int x^{p}\, \text{d}\mu_w \le \int x^{p}\, \text{d}\mu_q.
    \end{align*}
In particular, if $q<1$, then the measure $\mu_w$ is bounded and its support is contained in $[-2/\sqrt{1-q},2/\sqrt{1-q}]$. 
\end{proposition}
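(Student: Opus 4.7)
The plan is a direct pair-partition-by-pair-partition comparison of the two moment formulas, followed by a standard moment-growth argument for the support claim.

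Odd moments vanish for both $\mu_w$ and $\mu_q$ by hypothesis, so I only need to handle even moments $p=2m$. The key observation is that for each $\pi\in P_2(2m)$, by the definition of $\rho$ recalled in the introduction,
\begin{align*}
\rho(f_\pi,w)=\int_{[0,1]^{|V(f_\pi)|}}\prod_{(i,j)\in E(f_\pi)} w(x_i,x_j)\,\text{d}x,
\end{align*}
and since $w$ takes values in $[0,q]$, the nonnegative integrand is bounded pointwise by $q^{|E(f_\pi)|}$, giving $\rho(f_\pi,w)\le q^{|E(f_\pi)|}$. Summing over $\pi\in P_2(2m)$ yields exactly the desired inequality
\begin{align*}
\int x^{2m}\,\text{d}\mu_w=\sum_{\pi\in P_2(2m)}\rho(f_\pi,w)\le \sum_{\pi\in P_2(2m)}q^{|E(f_\pi)|}=\int x^{2m}\,\text{d}\mu_q.
\end{align*}

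For the support statement when $q<1$, I invoke the classical fact of Bożejko-Kümmerer-Speicher \cite{boejko1997} that $\mu_q$ is compactly supported in $[-2/\sqrt{1-q},2/\sqrt{1-q}]$, so that $\int x^{2m}\,\text{d}\mu_q\le (2/\sqrt{1-q})^{2m}$. Combined with the moment inequality from the previous paragraph, this gives $\limsup_{m}\bigl(\int x^{2m}\,\text{d}\mu_w\bigr)^{1/(2m)}\le 2/\sqrt{1-q}$. A routine argument then rules out any mass outside $[-2/\sqrt{1-q},2/\sqrt{1-q}]$: positive mass on $\{|x|>R\}$ with $R>2/\sqrt{1-q}$ would force $\int x^{2m}\,\text{d}\mu_w\ge R^{2m}\mu_w(\{|x|>R\})$, whose $(2m)$-th root tends to $R$, contradicting the bound.

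I do not foresee any real obstacle. The argument is entirely a monotonicity of integrals plus the classical support of the $q$-Gaussian; the latter is also recovered independently from the Fock-space realization of $\mu_w$ developed later in the paper, so the proof remains self-contained within the framework the paper builds.
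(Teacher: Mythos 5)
Your proof is correct and follows essentially the same route as the paper's: you bound $\rho(f_\pi,w)$ pointwise by $q^{|E(f_\pi)|}$ term-by-term over pair partitions to get the moment inequality, then transfer the compact support of $\mu_q$ to $\mu_w$ via the $(2m)$-th root of moments. The only cosmetic difference is that the paper packages the last step via the quantity $\mu^\infty:=\lim_p\bigl(\int x^{2p}\,\text{d}\mu\bigr)^{1/2p}$ and the fact that $\operatorname{supp}(\mu)\subset[-\mu^\infty,\mu^\infty]$, which you instead spell out with the standard contradiction argument.
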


While the previous proposition reveals probabilistic properties of the measures $\mu_w$'s, we also explore the algebraic aspect. Specifically, we construct a representation of $\mu_w$ as the distribution of a particular operator on the full Fock space. These constructions will hopefully allow for further exploration of operator-valued random variables associated with the $\epsilon$-independence framework, connecting them to other algebraic constructions in free probability. We refer to Section~\ref{sec: fock construction} where we carry this construction and establish some properties of the measures $\mu_w$'s, in particular, that they are absolutely continuous with respect to the Lebesgue measure.

Furthermore, we extend Theorem~\ref{theorem: 1} by exploring several generalizations. In Section~\ref{sec: non i.i.d case}, we generalize Theorem~\ref{theorem: 1} beyond the identically distributed case and establish a weighted CLT for $\epsilon$-independence. This generalization requires dealing with the weighted graphon theory \cite{lovasz2008convergence1,lovasz2012convergence2}. In Section~\ref{sec: multivariate case}, we establish the multivariate version of Theorem~\ref{theorem: 1}, which requires using a notion of ``joint'' graphon convergence, provided by the concept of decorated graphons \cite{lovasz2022multigraph}. Finally, in Section~\ref{sec: negative graphon}, we explore formulations of Theorem~\ref{theorem: 1} covering the case of $q$-Gaussians with $-1\leq q\leq 0$. Given the technical sophistication required to address these generalizations rigorously, we defer the precise statements and proofs to the corresponding sections. The proof of Theorem~\ref{theorem: 1} is carried out in Section~\ref{sec: proof of main th}. 

\subsection*{Acknowledgments} 
This work was initiated during the workshop ``Random matrices and Free probability'' held in June 2024 at Institut de Math\'ematiques de Toulouse. We would like to thank Mireille Capitaine, in particular, for the stimulating program.

\section{Fock representation of $\mu_w$ and first properties}\label{sec: fock construction}

\subsection{Preliminaries}We start this section by recalling some standard notions of free probability and graphon theory.
Given $p\in \N$, a partition $\pi=\{V_1, \ldots ,V_m\}$ of $[p]$ is a collection of disjoint sets $V_1,\ldots,V_m$ called blocks such that 
\begin{align*}
    V_1 \cup \cdots \cup V_m=[p].
\end{align*}
We denote by $\vert \pi\vert$ the number of blocks of $\pi$. We denote by $P(p)$ the set of partitions of $[p]$ and $P_2(p)$ the set of all pair partitions of $[p]$. Here, we define 
\begin{align*}
    \sum_{\pi\in P_2(p)}t(\pi):=0,
\end{align*}
for odd integers $p$ and function $t:P_2(p)\to \R$.
We say that two blocks $V_1$ and $V_2$ of a partition $\pi \in P(p)$ cross if there exist $i<k<j<l$ such that $\{i,j\}\subset V_1$ and $\{k,l\} \subset V_2$. 
We say that a partition $\pi \in P(p)$ is a \textit{noncrossing} partition if none of its blocks cross each other. 
We denote by $NC(p)$ the set of all noncrossing partitions of $[p]$ and $\NC_2(p)$ the set of all noncrossing pair partitions of $[p]$. 
Finally, recall that for a partition $\pi \in P(p)$, we denote by $f_\pi$ its \textit{intersection graph}. It is the graph over the blocks of $\pi$ such that two blocks are connected if they cross, under some arbitrary labeling. We say that a partition $\pi$ is connected whenever $f_\pi$ is; see Figures \ref{fig: partitions} and \ref{fig: partitions gc}.
\begin{figure}[H]
     \centering
        \begin{subfigure}[b]{0.3\textwidth}
         \centering
         \begin{tikzpicture}[scale=0.50]
            \draw[-] (1,0) edge (1,1);
            \draw[-] (1,1) edge (3,1);
            \draw[-] (3,1) edge (3,0);
        
            \draw[-] (2,0) edge (2,2);
            \draw[-] (2,2) edge (5,2);
            \draw[-] (5,2) edge (5,0);
        
            \draw[-] (4,0) edge (4,1);
            \draw[-] (4,1) edge (6,1);
            \draw[-] (6,1) edge (6,0);
        \end{tikzpicture}
        \caption{Connected partition}
    \end{subfigure}
     \hfill
     \begin{subfigure}[b]{0.3\textwidth}
         \centering
         \begin{tikzpicture}[scale=0.50]
            \draw[-] (1,0) edge (1,1);
            \draw[-] (1,1) edge (3,1);
            \draw[-] (3,1) edge (3,0);
        
            \draw[-] (2,0) edge (2,2);
            \draw[-] (2,2) edge (4,2);
            \draw[-] (4,2) edge (4,0);
        
            \draw[-] (5,0) edge (5,1);
            \draw[-] (5,1) edge (6,1);
            \draw[-] (6,1) edge (6,0);
        \end{tikzpicture}
         \caption{General partition}
     \end{subfigure}
     \hfill
     \begin{subfigure}[b]{0.3\textwidth}
         \centering
         \begin{tikzpicture}[scale=0.50]
            \draw[-] (1,0) edge (1,2);
            \draw[-] (1,2) edge (4,2);
            \draw[-] (4,2) edge (4,0);
        
            \draw[-] (2,0) edge (2,1);
            \draw[-] (2,1) edge (3,1);
            \draw[-] (3,1) edge (3,0);
        
            \draw[-] (6,0) edge (6,1);
            \draw[-] (6,1) edge (5,1);
            \draw[-] (5,1) edge (5,0);
        \end{tikzpicture}
         \caption{Noncrossing partition}
     \end{subfigure}
        \caption{Examples of partitions.}
        \label{fig: partitions}
\end{figure}
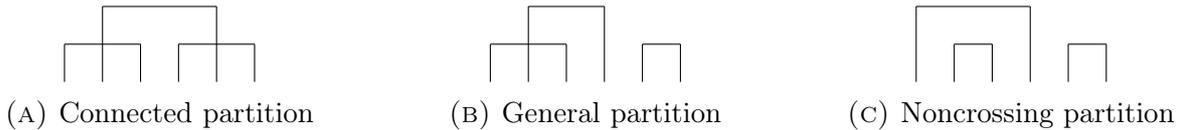
\begin{figure}[H]
\centering
     \begin{subfigure}[b]{0.3\textwidth}
         \centering
         \begin{tikzpicture}[scale=0.50]
            \node at (0,0) (A) {};
            \fill[black] (A) circle(1.5pt);
            \node at (1,1) (B) {};
            \fill[black] (B) circle(1.5pt);
            \node at (2,0) (C) {};
            \fill[black] (C) circle(1.5pt);

            \draw[-] (0,0) edge (1,1);
            \draw[-] (1,1) edge (2,0);
        \end{tikzpicture}
        \caption{Connected partition}
    \end{subfigure}
    \hfill
     \begin{subfigure}[b]{0.3\textwidth}
         \centering
         \begin{tikzpicture}[scale=0.50]
            \node at (0,0) (A) {};
            \fill[black] (A) circle(1.5pt);
            \node at (1,1) (B) {};
            \fill[black] (B) circle(1.5pt);
            \node at (2,0) (C) {};
            \fill[black] (C) circle(1.5pt);

            \draw[-] (0,0) edge (1,1);
        \end{tikzpicture}
         \caption{General partition}
     \end{subfigure}
     \hfill
     \begin{subfigure}[b]{0.3\textwidth}
         \centering
         \begin{tikzpicture}[scale=0.50]
            \node at (0,0) (A) {};
            \fill[black] (A) circle(1.5pt);
            \node at (1,1) (B) {};
            \fill[black] (B) circle(1.5pt);
            \node at (2,0) (C) {};
            \fill[black] (C) circle(1.5pt);
        \end{tikzpicture}
         \caption{Noncrossing partition}
     \end{subfigure}
        \caption{Examples of intersection graphs.}
        \label{fig: partitions gc}
\end{figure}
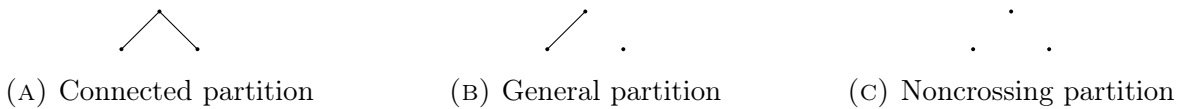

Given $\pi \in P(p)$ and $u,v\in [p]$, we denote $u\sim_\pi v$ if $u,v$ are in the same block of $\pi$. 
For $n,p \ge 1$, and $i\in [n]^p$, we denote $\ker(i) \in P(p)$ the partition such that $i_u=i_v$ if and only if $u$ and $v$ belong to the same block of $\ker(i)$. Now given $g_n=([n],E)$ a simple graph on $n$ vertices, we denote $\NC(g_n,i)$ the set of $(g_n,i)$-noncrossing partitions, namely, those partitions $\pi \in P(k)$ such that $\pi \le \ker(i)$ (that is, $\pi$ only connects nodes that are connected by $\ker (i)$), and  
\begin{equation}\label{eq: def-non-crossing eps partitions}
\text{if }\, u_1<u_2<v_1<v_2 \text{ such that ${u_1} \nsim_\pi {u_2}$, ${u_1} \sim_\pi {v_1}$, ${u_2}\sim_\pi {v_2}$,\, then $(i_{u_1}i_{u_2}) \in E$.} 
\end{equation}
Denote $\cumuf_n(a_1,\ldots,a_n)$ the free cumulants of $(a_1,\ldots,a_n)$. 

\subsection{Fock representation of $\mu_w$}
Given any symmetric measurable function $w:[0,1]^2\to [-1,1]$,  we denote by $\mu_{w}$ the probability measure whose moments are given by 
\begin{align}\label{eq: definition of mu_w}
   \sum_{\pi \in P_2(2p)}\int_{[0,1]^{V(f_\pi)}} \prod_{(u,v) \in E(f_\pi)} w(x_u,x_v) \, \text{d}x, 
\end{align}
where $f_\pi=(V(f_\pi),E(f_\pi))$ is the intersection graph of $\pi$, and the odd moments vanish. 
In this section, we define a representation of $\mu_w$ as the distribution of some operator acting on the full Fock space. We will carry the construction for graphons taking values in $[-1,1]$ rather than only $[0,1]$ as in Theorem~\ref{theorem: 1}. We refer to Section~\ref{sec: negative graphon} where we investigate the corresponding limit theorem.

The construction is inspired by that of the $q$-Brownian motion given by
creation and annihilation operators in \cite{bozejko1991example}. In order to get the positive definiteness of the "twisted" scalar product, we will use the very general results of \cite{bozejko1994completely}, which deal with the positivity of maps on the symmetric group defined from an operator satisfying the braid (or Yang-Baxter) relations.

Given $w$, we define the operator $T_w:L^2([0,1]^2)\to L^2([0,1]^2)$ by
$$T_wh(x,y)=w(x,y)h(y,x),$$
for any $h\in L^2([0,1]^2)$. We will use $\otimes$ to denote the tensor product of Hilbert spaces in such a way that $(L^2([0,1]))^{\otimes n}$ can always be identified with $L^2([0,1]^n)$ for $n\geq 1$. 
Consequently, given the Hilbert space $H:=L^2([0,1])$, $T_w$ can be seen as a self-adjoint contraction $T_w\in B(H\otimes H)$ with $\|T_w\|_\infty=\sup_{x,y\in [0,1]} |w|$, and satisfying the Yang-Baxter equation
$$(id_H\otimes T_w)\cdot(T_w\otimes id_H)\cdot(id_H\otimes T_w)=(T_w\otimes id_H)\cdot(id_H\otimes T_w)\cdot(T_w\otimes id_H),$$
where $id_H$ denotes the identity operator on $H$. Indeed, 
for any $h\in L^2([0,1]^3)$,
\begin{align*}
   & [(id_H\otimes T_w)\cdot(T_w\otimes id_H)\cdot(id_H\otimes T_w)h](x,y,z)\\
   &=h(z,y,x)w(x,y)w(y,z)w(z,x)\\
   &=[(T_w\otimes id_H)\cdot(id_H\otimes T_w)\cdot(T_w\otimes id_H)h](x,y,z).
\end{align*}Then, for any $i\geq 1$, we define
$$T_i:=id_H\otimes \cdots \otimes id_H\otimes T_w$$
acting on $H^{\otimes i+1}$ (note that $T_1=T_w$). By amplification, $T_i$ is also acting on $H^{\otimes n}$ for all~$n\geq i+1$. Consider for fixed $n\in \mathbb{N}$ the permutation group $S_n$ and denote by
$\sigma_i \in S_n$ ($i = 1, . . . , n - 1$) the transposition between $i$ and $i + 1$. Since the $T_i$'s satisfy the braid relations, we can define the function
$\varphi_n:S_n\to B(H^{\otimes n})$
by quasi-multiplicative extension of $\varphi_n(\sigma_i)=T_i$ (see \cite{bozejko1994completely}). More precisely, for a reduced word $\sigma = \sigma_{i_1}\cdots \sigma_{i_k}\in S_n$ with $1\leq i_1,\ldots,i_k\leq n-1$, we set $\varphi_n(\sigma)=T_{i_1}\cdots T_{i_k}$.

On the full Fock space $$\mathcal{F}=\bigoplus_{n=0}^\infty H^{\otimes n}\ \ (\text{where}\ H^{\otimes 0}=\mathbb{C}\Omega),$$
we define the symmetric bilinear form $\langle ,\rangle_{T_w}$ given by
$$\langle \xi,\eta\rangle_{T_w}:=\delta_{nm}\langle \xi,P^{(n)}\eta\rangle $$ for $\xi \in H^{\otimes n}$, $\eta\in H^{\otimes m}$ where
$$P^{(n)}:=\sum_{\sigma \in S_n} \varphi(\sigma) $$
is the canonic operator corresponding to $\varphi$. According to \cite[Theorem 2.2]{bozejko1994completely} the operators $P^{(n)}$ are positive, thus $\langle ,\rangle_{T_w}$ is positive
definite. If $\|T_w\| < 1$ then, by \cite[Theorem 2.3]{bozejko1994completely}, we have that all the $P^{(n)}$'s are strictly positive
and we can take $\mathcal{F}_{T_w}$ as the completion of $\mathcal{F}$ with respect to $\langle ,\rangle_{T_w}$. When
$\|T_w\| = 1$, the kernel might not be empty and the completion is made after taking the corresponding quotient.

Now we define, for each $h\in L^2([0,1])$, the canonic creation operator $c(h)$ by 
\begin{align*}
    c(h)\cdot g=h\otimes g\in L^2([0,1])^{\otimes (n+1)},
\end{align*}
for any $g\in L^2([0,1])^{\otimes n}$. Denote by $c(h)^*$ the adjoint of $c(h)$ with respect to $\langle ,\rangle_{T_w}$. 
For any $h\in L^2([0,1])$, we define $\mu_{w,h}$ the probability measure whose moments are given by 
\begin{equation}
    \int x^p\,\text{d}\mu_{w,h}:=\sum_{\pi \in P_2(p)}\int_{[0,1]^{V(f_\pi)}} \prod_{u\in V(f_\pi)}|h(x_u)|^2\prod_{(u,v) \in E(f_\pi)} w(x_u,x_v) \, \text{d}x,\label{eq:moments}
\end{equation}
Note that $\mu_w=\mu_{w,1}$. 

\begin{theorem}
Let $h\in L^2([0,1])$ and $A(h)= c(h)+c(h)^*$. 
The moments of $A(h)$, with respect to the vector state $\tau:=\langle \cdot \Omega ,\Omega \rangle_{T_w}$, coincide with those of $\mu_{w,h}$. 
In particular, $\mu_w$ is the law of $A(1)$ with respect to $\tau$. 
\end{theorem}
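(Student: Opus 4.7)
The strategy mirrors the Bo\.zejko--Speicher moment calculation for $q$-Gaussians on a twisted Fock space, with the scalar $q$ now replaced by the graphon-valued operator $T_w$. The plan is to expand
$\tau(A(h)^p)=\sum_{\epsilon\in\{c,c^*\}^p}\langle c(h)^{\epsilon_1}\cdots c(h)^{\epsilon_p}\Omega,\Omega\rangle_{T_w}$,
observe that each summand vanishes unless $\epsilon$ encodes the opener/closer pattern of some pair partition $\pi\in P_2(p)$, and then identify the contribution of each such $\pi$ with the integral appearing on the right-hand side of \eqref{eq:moments}.

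\textbf{Formula for the adjoint.} The central computational input is an explicit formula for $c(h)^*$. Using the recursion
$P^{(n)}=\bigl(\sum_{k=0}^{n-1}T_1T_2\cdots T_k\bigr)\bigl(\mathrm{id}_H\otimes P^{(n-1)}\bigr)$
that underlies the positivity result \cite[Theorem 2.2]{bozejko1994completely}, combined with the defining identity $\langle c(h)\xi,\eta\rangle_{T_w}=\langle \xi,c(h)^*\eta\rangle_{T_w}$, I would derive
$$c(h)^*(g_1\otimes\cdots\otimes g_n)=\sum_{k=1}^n\bigl(\ell_h\otimes \mathrm{id}_H^{\otimes(n-1)}\bigr)\circ T_1T_2\cdots T_{k-1}(g_1\otimes\cdots\otimes g_n),$$
where $\ell_h(g):=\langle h,g\rangle$ and the empty product $T_1\cdots T_0$ is taken to be the identity. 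In contrast with the scalar $q$-case, each $T_w$ now introduces a nontrivial weight $w(\cdot,\cdot)$ depending on the integration variables, so the image is no longer a pure tensor; this is precisely what will produce the graphon factors.

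\textbf{Inductive evaluation and main obstacle.} I would then induct on $p$. Since $c(h)^*\Omega=0$, any monomial whose leftmost factor is $c(h)$ has vanishing vacuum expectation, giving $\tau(A(h)^p)=\tau(c(h)^*A(h)^{p-1})$. Expanding the leading $c(h)^*$ via the formula above contracts the position $1$ with one of the $p-1$ subsequent positions $j$ in $A(h)^{p-1}\Omega$; the accompanying string $T_1\cdots T_{k-1}$ slides the variable $x_{\{1,j\}}$ of the newly closed pair past the $k-1$ creations opened before $j$ that remain unmatched. Applying the inductive hypothesis to what remains, the factor $\ell_h\circ c(h)$ collapsed on each block $u$ yields the integral $\int|h(x_u)|^2\,\mathrm{d}x_u$, while the $T_w$'s that survive the successive contractions contribute exactly the functions $w(x_u,x_v)$ for edges $(u,v)\in E(f_\pi)$; after summation over $j$ and over the inductive choices one recovers \eqref{eq:moments}. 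The delicate point, and the main obstacle, is the combinatorial bookkeeping: verifying that the $T_w$'s collected by sliding strands past one another correspond bijectively to the crossings of $\pi$, with neither omission nor double counting. The Yang--Baxter relation satisfied by $T_w$ is what guarantees that the accumulated expression depends only on the pair partition $\pi$ (equivalently, only on the isomorphism class of $f_\pi$) and not on the specific order in which annihilations resolve, exactly paralleling the standard derivation of the $q^{\vert E(f_\pi)\vert}$ factor in the $q$-Gaussian case \cite{boejko1997}.
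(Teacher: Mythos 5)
Your plan follows essentially the same route as the paper: derive the explicit action of $c(h)^*$ from the Bo\.zejko--Speicher formula (your expression $c(h)^*(g_1\otimes\cdots\otimes g_n)=\sum_{k=1}^n(\ell_h\otimes\mathrm{id}_H^{\otimes(n-1)})\circ T_1\cdots T_{k-1}(g_1\otimes\cdots\otimes g_n)$ is equivalent to the paper's $a(h)\cdot[1+T_1+T_1T_2+\cdots+T_1\cdots T_{n-1}]$, i.e.\ to \eqref{eq:cfs}), then induct on $p$. However, the proposal stops exactly at what you yourself flag as ``the main obstacle'' and does not resolve it, so as it stands it is a sketch rather than a proof.

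The device the paper uses to dispose of the bookkeeping, and which you do not mention, is to induct on the \emph{state vector} rather than on the scalar moment. Concretely, the paper proves the intermediate identity
\[
A(h)^p\Omega \;=\; \sum_{\pi\in P_{1,2}(p)} h_\pi,
\]
where $P_{1,2}(p)$ is the set of incomplete pairings (Feynman diagrams) of $[p]$ and $h_\pi\in H^{\otimes |s(\pi)|}$ is an explicit function of the singleton variables, obtained by integrating out the paired variables against the product of $|h|^2$'s and of $w$-factors indexed by $E(f_\pi)$. In this formulation the inductive step is a straightforward bijection: applying $c(h)$ to $h_\pi$ corresponds to adding the new index as a singleton $\{1\}$, while each term of $c(h)^*h_\pi$ coming from \eqref{eq:cfs} corresponds to pairing the new index $1$ with one existing block, and the string $\prod_{\ell<k}w(x_\ell,y)$ in \eqref{eq:cfs} is \emph{by inspection} the product over the new crossings this pairing creates. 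There is no residual ``sliding strands'' argument to make and no risk of double-counting. Projecting onto $\mathbb{C}\Omega$ then kills every $h_\pi$ with a singleton, leaving the sum over $P_2(p)$ and giving \eqref{eq:moments}.

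Two smaller points. First, your recursion $\tau(A(h)^p)=\tau\bigl(c(h)^*A(h)^{p-1}\bigr)$ is valid, but after expanding the leading $c(h)^*$ the remaining expression is no longer a vacuum moment of a power of $A(h)$, so the inductive hypothesis on moments alone cannot be applied; this is precisely the reason the paper inducts on the richer object $A(h)^p\Omega$. Second, the Yang--Baxter relation is used only at the outset to make $\varphi_n$ well defined and to obtain positivity of $P^{(n)}$ via Bo\.zejko--Speicher; once \eqref{eq:cfs} is in hand the moment computation is a direct term-by-term verification that makes no further appeal to Yang--Baxter, so invoking it to guarantee order-independence of the accumulated $T_w$'s points in a direction the actual argument never needs.
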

\begin{proof}
We will first show that for any $h\in L^2([0,1])$, the adjoint $c(h)^*$ of $c(h)$ with respect to $\tau$ is given, for any~$g\in L^2([0,1])^{\otimes n}$, by
 \begin{equation}\label{eq:cfs}
     [c(h)^*\cdot g](x_1,\ldots,x_{n-1})=\sum_{k=1}^n\int_0^1 \prod_{\ell=1}^{k-1}w(x_\ell,y)\cdot \overline{h(y)}\cdot g(x_1,\ldots,x_{k-1},y,x_k,\ldots,x_{n-1})dy.
 \end{equation}
To this aim, for any $h\in L^2([0,1])$, let us denote by $a(f)$ the left annihilation operator given, for any~$g\in L^2([0,1])^{\otimes n}$, by
    $$[a(h)\cdot g](x_1,\ldots,x_{n-1})=\int_0^1 \overline{h(y)}\cdot g(y,x_1,\ldots,x_{n-1})dy.$$
        From \cite[Theorem 3.1]{bozejko1994completely}, we know that
        $$c(h)^*=a(h)\cdot [1+T_1+T_1T_2+\cdots +T_1T_2\cdots T_{n-1}]$$
        on $H^{\otimes n}$.
  By induction on $j$ decreasing from $k$ to $1$, we have, for any $1\leq j\leq k \leq n-1$ and $g\in L^2([0,1])^{\otimes n}$,
    $$[T_{j}T_{j+1}\cdots T_kg](x_1,\ldots,x_n)=\prod_{\ell=j+1}^{k}w(x_j,x_{\ell})\cdot g(x_1,\ldots,x_{j-1},x_{j+1},\ldots,x_{k+1},x_j,x_{k+2},\ldots,x_n),$$
        which implies that
            $$[T_1T_2\cdots T_kg](x_1,\ldots,x_n)=\prod_{\ell=2}^{k}w(x_1,x_{\ell})\cdot g(x_2,,\ldots,x_{k+1},x_1,x_{k+2},\ldots,x_n),$$
            and
$$
    [a(h)\cdot (T_1\cdots T_k)g](x_1,\ldots,x_{n-1})
   = \int_0^1 \prod_{\ell=2}^{k}w(y,x_{\ell-1})\cdot \overline{h(y)}\cdot g(x_1,\ldots,x_{k},y,x_{k+1},\ldots,x_{n-1})dy.
 $$
 Finally, we get
     \begin{align}&[c(h)^*\cdot g](x_1,\ldots,x_n)
    \nonumber\\&=[a(f)\cdot (1+T_1+T_1T_2+\cdots +T_1T_2\cdots T_{n-1})g](x_1,\ldots,x_n)\nonumber\\
    &=\sum_{k=1}^n\int_0^1 \prod_{\ell=1}^{k-1}w(x_\ell,y)\cdot \overline{h(y)}\cdot g(x_1,\ldots,x_{k-1},y,x_k,\ldots,x_{n-1})dy,
     \end{align}
which establishes \eqref{eq:cfs}.      
We are now ready to compute
$ (c(h)+c(h)^*)^{p}\Omega$. We consider the set $P_{1,2}(p)$ of incomplete pairings of $[p]$, i.e., partitions of $[p]$ into singletons and pairs (incomplete pairings are sometimes called Feynman diagrams, see \cite{janson1997gaussian}). For any $\pi\in P_{1,2}(p)$, we have as before the intersection graph $f_{\pi}$ whose vertices are the blocks of~$\pi$. However, we need to specify the rules for counting crossings for singletons. Vertices that are not coupled should be singletons and
are drawn with straight lines to the top. The intersection graph $f_{\pi}$ is the graph over the blocks of $\pi$ such that two
blocks are connected if they cross: note that in addition to the crossings between two pairs of $\pi$, there exists an edge between a singleton $\{i\}\in \pi$ and a pair $\{a<b\}\in \pi$ whenever $a<i<b$, according to our convention. We denote by $p(\pi)$ the set of pairings in $\pi$ and by $s(\pi)$ the singletons in $\pi$. We have
$$  (c(h)+c(h)^*)^{p}\Omega=\sum_{\pi \in P_{1,2}(p)}h_{\pi},$$
where for any $\pi \in P_{1,2}(p)$ with $\ell$ singletons and $m$ pairings, $h_{\pi}$ is the element of $L^2([0,1])^{\ell}$ given by
$$h_{\pi}(x_i:i\in s(\pi))=\int_{[0,1]^{m}}\prod_{(u,v)\in E(f_{\pi})}w(x_u,x_v)\prod_{i\in s(\pi)}h(x_i)\prod_{j\in p(\pi)} |h(x_j)|^2dx_j.$$
Such a formula is proven by induction on $p$: we have $(c(h)+c(h)^*)\Omega=h=\sum_{\pi \in P_{1,2}(1)}h_{\pi}$, and, for $p\geq 1$,
\begin{align*}
(c(h)+c(h)^*)^{p+1}\Omega&=\sum_{\pi \in P_{1,2}(p)}c(h)h_\pi+\sum_{\pi \in P_{1,2}(p)}c(h)^*h_\pi\\
&=\sum_{\pi \in P_{1,2}(p)}\sum_{\substack{\tilde{\pi} \in P_{1,2}(p+1)\\ \{1\}\in \tilde{\pi}\text{ and } \tilde{\pi}|_{[p+1]\setminus \{1\}}=\pi}}h_{\tilde{\pi}}+\sum_{\pi \in P_{1,2}(p)}\sum_{\substack{\tilde{\pi} \in P_{1,2}(p+1)\\ \{1\}\notin \tilde{\pi}\text{ and } \tilde{\pi}|_{[p+1]\setminus \{1\}}=\pi}}h_{\tilde{\pi}}\\
&=\sum_{\tilde{\pi} \in P_{1,2}(p+1)}h_{\tilde{\pi}},
\end{align*}
where the explicit formula~\eqref{eq:cfs} of $c(h)^*$ is used to write
$$c(h)^*h_\pi=\sum_{\substack{\tilde{\pi} \in P_{1,2}(p+1)\\ 1\text{ is coupled with }\{2,\cdots,p+1\}\text{ in }\tilde{\pi}\\ \tilde{\pi}|_{[p+1]\setminus \{1\}}=\pi}}h_{\tilde{\pi}}=\sum_{\substack{\tilde{\pi} \in P_{1,2}(p+1)\\ \{1\}\notin \tilde{\pi}\text{ and } \tilde{\pi}|_{[p+1]\setminus \{1\}}=\pi}}h_{\tilde{\pi}}.$$
Finally, in the sum
$$  A(h)^{p}\Omega=\sum_{\pi \in P_{1,2}(p)}h_{\pi},$$
the only elements which are in $H^{\otimes 0}=\mathbb{C}\Omega$ are the $h_\pi$'s where $\pi$ is a complete pairing, i.e. $\pi\in  P_2(p)$. We get $$\langle A(h)^p\Omega,\Omega\rangle=\sum_{\pi \in \mathcal{P}_{2}(p)}h_{\pi},$$
which yields $\eqref{eq:moments}$.
\end{proof}

    \subsection{First properties}
We start by providing the free cumulants of the measures $\mu_{w,h}$'s.    
\begin{proposition}\label{proposition: free cumulants of mu_w}
The $p$-th free cumulant of $\mu_{w}$ is vanishing if $p$ is odd and is equal to
$$\cumuf_p(\mu_{w})=\sum_{\pi \in P_2^{con}(p)}\rho(f_\pi,w)$$
if $p$ is even, where $P_2^{con}(k)$ is the set of connected pair partitions of $P_2(p)$ (the partition for which the intersection graph has only one connected component). More generally, the $p$-th free cumulant of $\mu_{w,h}$ is vanishing if $p$ is odd and is equal to
$$\cumuf_p(\mu_{w,h})=\sum_{\pi \in P_2^{con}(p)}\int_{[0,1]^{V(f_\pi)}}\prod_{u\in V(f_\pi)}|h(x_u)|^2\prod_{(u,v) \in E(f_\pi)} w(x_u,x_v) \, \text{d}x$$
if $p$ is even.
\end{proposition}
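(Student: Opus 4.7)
The plan is to derive the formula from the free moment--cumulant relation. Recall that the free cumulants of a law $\mu$ are uniquely characterized by
\[
    m_p(\mu) = \sum_{\sigma \in \NC(p)} \prod_{V \in \sigma} \cumuf_{|V|}(\mu),
\]
so I would show that the candidates proposed in the statement reproduce the moments \eqref{eq:moments} of $\mu_{w,h}$ through this identity; uniqueness (Möbius inversion on $\NC(p)$) then forces them to coincide with the actual free cumulants.

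The combinatorial heart of the argument is a decomposition of $P_2(p)$ indexed by $\NC(p)$. Given $\pi \in P_2(p)$, let $\sigma(\pi)$ be the partition of $[p]$ whose blocks are the sets $W_C := \bigcup_{V \in C} V$, as $C$ ranges over the connected components of the intersection graph $f_\pi$. I would first establish that $\sigma(\pi) \in \NC(p)$ and that each of its blocks has even size. If two blocks $W_1, W_2$ interleaved at $a_1 < a_2 < a_3 < a_4$ with $a_1, a_3 \in W_1$ and $a_2, a_4 \in W_2$, then using that two pairs belonging to distinct connected components of $f_\pi$ cannot cross, every pair in the component of $a_2$ would have to lie entirely inside or entirely outside the interval spanned by the pair of $\pi$ containing $a_3$; but $a_2$ and $a_4$ sit on opposite sides of that interval, yielding the required contradiction. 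This produces a bijection
\[
    P_2(p) \longleftrightarrow \bigl\{(\sigma, (\pi_V)_{V \in \sigma}) : \sigma \in \NC(p) \text{ with even blocks},\ \pi_V \in P_2^{con}(V) \text{ for every } V\bigr\},
\]
where $\pi_V$ denotes the restriction of $\pi$ to the block $V$ (automatically connected, by construction of $\sigma(\pi)$).

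Under this bijection the intersection graph $f_\pi$ is the vertex-disjoint union of the graphs $f_{\pi_V}$, so the integrand defining \eqref{eq:moments} factors as a product of independent integrals over $V \in \sigma(\pi)$. Grouping terms accordingly yields
\[
    \int x^p \,\text{d}\mu_{w,h} = \sum_{\sigma \in \NC(p)} \prod_{V \in \sigma} \tilde\kappa_{|V|},
\]
where $\tilde\kappa_{2k}$ is the quantity proposed in the statement and $\tilde\kappa_k := 0$ for odd $k$ (so noncrossing partitions with at least one odd block contribute nothing). By uniqueness of free cumulants, $\tilde\kappa_p = \cumuf_p(\mu_{w,h})$, and specializing to $h = 1$ yields the claim for $\mu_w$.

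The main obstacle is the combinatorial lemma that $\sigma(\pi) \in \NC(p)$: once that structural fact is secured, everything else is bookkeeping via the factorization of the integrand over connected components.
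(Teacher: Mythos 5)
Your overall strategy is correct and is in fact the same route the paper takes, although the paper compresses it into a single sentence: it cites Lehner's multiplicativity argument (\cite[Proof of Theorem 4.1]{lehner2002free}), which is precisely the decomposition of $P_2(p)$ over $\sigma \in \NC(p)$ via connected components of $f_\pi$, followed by an appeal to the uniqueness of free cumulants. So you are reconstructing the cited argument in detail, and the structure — factoring the integrand in \eqref{eq:moments} over connected components and matching against the free moment--cumulant relation — is exactly right.

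There is, however, a gap in your proof of the combinatorial lemma that $\sigma(\pi) \in \NC(p)$. You apply the \emph{inside-or-outside} dichotomy (that a whole component $C_2$ must lie entirely inside or entirely outside the span of any single pair of $C_1$) to the pair $\{a_3, c\}$ of $\pi$ containing $a_3$, and claim that $a_2$ and $a_4$ end up on opposite sides of that span. This is false when $c$ lies strictly between $a_2$ and $a_4$: if, say, $a_2 < c < a_3$, then both $a_2$ and $a_4$ are \emph{outside} the interval $(c, a_3)$, and no contradiction follows. Moreover, the upgrade from the pairwise statement (``no pair of $C_2$ crosses $\{a_3,c\}$'') to the component-wide statement (``all of $W_{C_2}$ lies inside or all lies outside'') is itself a nontrivial use of the connectivity of $C_2$ that you invoke without proof. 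The fix is to first exploit the connectivity of $C_1$: since $a_1$ lies to the left of $(a_2, a_4)$ and $a_3$ lies inside $(a_2, a_4)$, a path in $f_\pi$ restricted to $C_1$ from the pair of $a_1$ to the pair of $a_3$ must contain a pair $\{x,y\} \in C_1$ with exactly one endpoint in $(a_2, a_4)$; applying the (properly justified) component-level dichotomy to \emph{that} pair $\{x,y\}$ then forces $a_2$ and $a_4$ to be on opposite sides of its span, giving the contradiction. With that repair the proof is complete and matches the paper's reference to Lehner.
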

\begin{proof}
    It is a consequence (following \cite[Proof of Theorem 4.1]{lehner2002free}) of the fact that we have
multiplicativity of the term $$\int_{[0,1]^{V(f_\pi)}}\prod_{u\in V(f_\pi)}|h(x_u)|^2\prod_{(u,v) \in E(f_\pi)} w(x_u,x_v) \, \text{d}x$$ over the connected components of any pair partition.
\end{proof}

Let $t\mu$ be the dilation of the measure $\mu$ by $t>0$, namely,
\begin{align*}
    t\mu(A)=\mu(t^{-1}A),
\end{align*}
for any Borel set $A$.
\begin{proposition}
Up to dilations, the set of probability measures $\mu_{w}$ is the same as the set of probability measures $\mu_{w,h}$.
\end{proposition}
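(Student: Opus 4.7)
One inclusion is immediate: every $\mu_w$ is already of the form $\mu_{w,h}$ by taking $h\equiv 1$. The substantive direction is the reverse, namely to show that for any graphon $w\colon[0,1]^2\to[-1,1]$ and any $h\in L^2([0,1])$, there exists a graphon $\tilde w$ and a scalar $t>0$ such that $\mu_{w,h}=t\cdot\mu_{\tilde w}$. The guiding idea is to absorb the weight $|h(x_u)|^2$ sitting at each vertex $u\in V(f_\pi)$ in the moment formula \eqref{eq:moments} into a reparametrization of the coordinates of $w$.

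Set $c:=\|h\|_2^2$; the case $c=0$ gives $\mu_{w,h}=\delta_0$ and is treated separately. Let $\phi\colon[0,1]\to[0,1]$ be the quantile function of the probability measure with density $|h|^2/c$, so that $\phi$ pushes Lebesgue measure forward to $|h|^2/c\,\mathrm{d}x$. Define
\[
\tilde w(x,y):=w(\phi(x),\phi(y)).
\]
Symmetry, measurability, and the pointwise bound $|\tilde w|\le 1$ are all inherited from $w$, so $\tilde w$ is indeed a graphon.

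The core step is a change of variables in \eqref{eq:moments}. For each $\pi\in P_2(p)$, the intersection graph $f_\pi$ has exactly $p/2$ vertices, one per block. Substituting $x_u=\phi(y_u)$ for every $u\in V(f_\pi)$ turns each factor $|h(x_u)|^2\,\mathrm{d}x_u$ into $c\cdot \mathrm{d}y_u$, yielding
\[
\int_{[0,1]^{V(f_\pi)}}\prod_{u}|h(x_u)|^2\prod_{(u,v)\in E(f_\pi)}w(x_u,x_v)\,\mathrm{d}x
=c^{p/2}\,\rho(f_\pi,\tilde w).
\]
Summing over $\pi\in P_2(p)$ gives
\[
\int x^p\,\mathrm{d}\mu_{w,h}=c^{p/2}\int x^p\,\mathrm{d}\mu_{\tilde w}=\int x^p\,\mathrm{d}\bigl(\sqrt c\cdot\mu_{\tilde w}\bigr),
\]
so the moment sequences of $\mu_{w,h}$ and $\sqrt c\cdot\mu_{\tilde w}$ coincide. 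Since both measures are compactly supported (the operator $A(h)$ is bounded on $\mathcal F_{T_w}$, and the analogous bound applies to $A(1)$ on $\mathcal F_{T_{\tilde w}}$), the moment problem is determinate and we conclude $\mu_{w,h}=\sqrt c\cdot\mu_{\tilde w}$.

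The only non-routine ingredient is the existence of the measure-preserving map $\phi$, which is classical (quantile function of a probability measure on $[0,1]$); all remaining steps are a one-line change of variables in the defining moment formula \eqref{eq:moments}.
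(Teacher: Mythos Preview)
Your argument is essentially identical to the paper's: normalize $|h|^2$ to a probability density, push Lebesgue measure through its quantile function $\phi$, set $\tilde w(x,y)=w(\phi(x),\phi(y))$, and change variables in the moment formula. One small slip: the claim that $A(h)$ is bounded (hence $\mu_{w,h}$ compactly supported) fails when $\sup|w|=1$ --- e.g.\ $w\equiv 1$ gives the Gaussian --- but this is harmless here since $\mu_{w}$ and $\mu_{w,h}$ are \emph{defined} in the paper by their moment sequences (and in any case satisfy Carleman's condition via the $q$-Gaussian bound), so equality of moments already forces equality of measures.
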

\begin{proof}
Let us prove that $\mu_{w,h}$ is equal to a certain $\mu_{w'}$ (up to a dilation). Note that the dilation acts as 
$t\mu_{w,h}=\mu_{w,th}.$ By dilating $\mu_{w,h}$ by $1/\|h\|_{L^2}$ if necessary, we can assume that $|h|^2$ is a probability density on $[0,1]$. We denote by $G:]0,1[\to [0,1]$ the pseudo-inverse of the cumulative distribution function of $|h|^2$, in such a way that, for any uniform random variable $U$ on $[0,1]$, the density of $G(U)$ is given by $|h|^2$. We consider the graphon $w':[0,1]^2\to [-1,1]$ given by
$$w'(x,y)=w(G(x),G(y)).$$
Therefore, we can write
\begin{align*}
     \int x^{p}\, \text{d}\mu_{w'}&=\sum_{\pi \in P_2(p)}\int_{[0,1]^{V(f_\pi)}} \prod_{(u,v) \in E(f_\pi)} w(G(x_u),G(x_v)) \, \text{d}x\\
     &=\sum_{\pi \in P_2(p)}\int_{[0,1]^{V(f_\pi)}} \prod_{(u,v) \in E(f_\pi)} w(x_u,x_v) \prod_{u\in V(f_\pi)}|h(x_u)|^2\, \text{d}x\\
     &= \int x^{p}\, \text{d}\mu_{w,h}.\qedhere
\end{align*}
\end{proof}
The family of measures $\mu_w$ is considerably large. In particular, the following proposition shows that it is closed under classical and free convolution.
\begin{proposition}\label{proposition: mu_w is closed under cla/free}
Up to dilations, the set of probability measures $\mu_{w}$ is closed by taking classical convolution and by taking free convolution. 
\end{proposition}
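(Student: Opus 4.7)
The plan is to realize each of $\mu_{w_1}*\mu_{w_2}$ and $\mu_{w_1}\boxplus \mu_{w_2}$ as a dilation $\sqrt{2}\,\mu_{w^{(c)}}$ of a measure coming from an explicit block-diagonal graphon built out of $w_1$ and $w_2$. Concretely, I split $[0,1]$ into the halves $I_1=[0,1/2]$ and $I_2=[1/2,1]$ and, for $c\in\{0,1\}$, define
\begin{align*}
    w^{(c)}(x,y) = \begin{cases}
        w_1(2x, 2y) & \text{if } (x,y) \in I_1 \times I_1,\\
        w_2(2x-1, 2y-1) & \text{if } (x,y) \in I_2 \times I_2,\\
        c & \text{otherwise.}
    \end{cases}
\end{align*}
Heuristically, $w^{(c)}$ is the graphon limit of the $2n$-vertex graph obtained by gluing a graph converging to $w_1$ with one converging to $w_2$; the choice $c=0$ means no cross-edges and hence freeness between the two halves of variables in the CLT, while $c=1$ means all cross-edges and hence classical independence between the two halves. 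The proposition will therefore turn out to be a combinatorial shadow of this CLT picture.

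For the free case (using $w^{(0)}$), I would verify the cumulant identity
\begin{align*}
    \cumuf_p(\mu_{w^{(0)}}) \;=\; (1/\sqrt{2})^p\,\bigl[\cumuf_p(\mu_{w_1})+\cumuf_p(\mu_{w_2})\bigr] \;=\; (1/\sqrt{2})^p\,\cumuf_p(\mu_{w_1}\boxplus\mu_{w_2}),
\end{align*}
which is precisely the cumulant relation characterizing the dilation $\mu_{w^{(0)}}=(1/\sqrt{2})(\mu_{w_1}\boxplus\mu_{w_2})$. By Proposition~\ref{proposition: free cumulants of mu_w}, $\cumuf_p(\mu_{w^{(0)}})$ is the sum over connected pair partitions $\pi\in P_2^{con}(p)$ of $\rho(f_\pi,w^{(0)})$. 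Because the off-diagonal blocks of $w^{(0)}$ vanish, the integrand in $\rho(f_\pi,w^{(0)})$ is zero unless all coordinates $x_u$ lie in the same half $I_i$, and connectedness of $f_\pi$ makes this a clean dichotomy. Splitting the integral accordingly and using the linear changes of variables $y_u=2x_u$ or $y_u=2x_u-1$ turns each surviving contribution into $(1/2)^{|V(f_\pi)|}\rho(f_\pi,w_i)$, and the identity $|V(f_\pi)|=p/2$ then yields the claim.

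For the classical case (using $w^{(1)}$), I would work directly with moments and check $m_p(\mu_{w^{(1)}})=(1/\sqrt{2})^p\,m_p(\mu_{w_1}*\mu_{w_2})$. With $c=1$ I decompose $\rho(f_\pi,w^{(1)})$ by recording, for each vertex $u$ of $f_\pi$, which of the two halves $x_u$ belongs to, i.e. by attaching a $2$-coloring of $V(f_\pi)$. Monochromatic edges of color $i$ contribute the factor $w_i$ while bichromatic edges contribute the factor $1$, so after the same rescaling I obtain
\begin{align*}
    \rho(f_\pi,w^{(1)}) \;=\; (1/2)^{p/2}\sum_{V_1\sqcup V_2=V(f_\pi)} \rho(f_\pi|_{V_1},w_1)\,\rho(f_\pi|_{V_2},w_2),
\end{align*}
where $f_\pi|_{V_i}$ denotes the induced subgraph. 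Summing over $\pi\in P_2(p)$, the data of a pair partition of $[p]$ together with a $2$-coloring of its blocks is equivalent to the data of an even-sized subset $A\subset[p]$ together with pair partitions of $A$ and of $[p]\setminus A$, which is exactly the combinatorics appearing in the binomial expansion of $m_p(\mu_{w_1}*\mu_{w_2})$. The step I expect to be the main technical obstacle is this reindexing, in particular verifying that the intersection subgraphs behave correctly under the bijection, i.e. that $f_{\pi|_A}=f_\pi|_{V_1}$ when $V_1$ is the set of blocks of $\pi$ contained in $A$; once this is unwound, both claims collapse to the same elementary change-of-variables calculation.
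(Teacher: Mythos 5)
Your proposal is correct, and it takes a genuinely different route from the paper's proof. The paper realizes $\mu_{w_1}*\mu_{w_2}$ and $\mu_{w_1}\boxplus\mu_{w_2}$ \emph{indirectly} via Theorem~\ref{theorem: 1}: it picks graph sequences $g_n\to w_1$, $g'_n\to w_2$ on disjoint vertex sets, glues them with either a complete or empty bipartite cross-structure, and then evaluates the same CLT limit two ways (once through the glued graph's graphon $w^*$ or $\overline{w}^*$, once by splitting $S_n$ into two classically or freely independent blocks), concluding by Carleman. Your argument is the \emph{direct} combinatorial computation: you define the block graphons $w^{(0)}$, $w^{(1)}$ outright (identical to the paper's $\overline{w}^*$ and $w^*$ at $\lambda=1/2$), then verify the identity at the level of free cumulants via Proposition~\ref{proposition: free cumulants of mu_w} in the free case (using that vanishing off-diagonal blocks confine each connected $f_\pi$ to a single half), and at the level of moments via the block/coloring bijection between $\bigl(P_2(p)$, $2$-coloring of blocks$\bigr)$ and $\bigl(A\subset[p]$ even, $P_2(A)\times P_2([p]\setminus A)\bigr)$ in the classical case. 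This is precisely the alternative the paper itself flags in Remark~\ref{remark: construction convolution} without carrying it out. What the paper's route buys is brevity and reuse of the already-proved CLT (at the cost of invoking a graphon realization result and Carleman twice); what your route buys is self-containment and a transparent combinatorial explanation of \emph{why} the block graphons work — in particular, the key observation that $f_{\pi|_A}$ is the induced subgraph of $f_\pi$ on the blocks contained in $A$ because the crossing relation depends only on relative order, which you correctly identify as the crux. One small remark: the paper proves the slightly stronger statement $\sqrt{\lambda}\,\mu_{w_1}+\sqrt{1-\lambda}\,\mu_{w_2}=\mu_{w^*}$ for arbitrary $\lambda\in(0,1)$, whereas you fix $\lambda=1/2$; this is immaterial for the proposition as stated, and your argument extends verbatim to general $\lambda$ by splitting $[0,1]$ at $\lambda$ instead of $1/2$.
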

\begin{proof}
    We first prove that for any (positive) graphons $w,w':[0,1]^2\to  [0,1]$ and $\lambda\in (0,1)$, there exists another graphon $w^*$ such that
    \begin{align*}
        \sqrt{\lambda}\mu_w+\sqrt{1-\lambda}\mu_{w'}=\mu_{w^*}.
    \end{align*}
    Let $n_\lambda=\lceil \lambda n\rceil$. By \cite[Theorem 4.8]{borgs2006counting}, we can find two sequences of graphs $g_n,g'_n$ over $n_\lambda$ and $n-n_\lambda$ vertices, respectively, such that $g_n\to w$ and $g'_n \to w'$. We label their vertices as
    \begin{align*}
        &V(g_n)=[n_\lambda];\\
        &V(g'_n)=\{n_\lambda+1,\ldots,n\}.
    \end{align*}
    Consider then the graph $g^*_n$ over $[n]$ defined as 
    \begin{align*}
        E(g^*_n)=\{(i,j)\in E(g_n)\}\sqcup \{(i,j)\in E(g'_n)\}&\sqcup\{(i,j)\in V(g_n)\times V(g'_n)\}\\
        &\sqcup\{(i,j)\in V(g'_n)\times V(g_n)\}.
    \end{align*}
    Informally, $g^*_n$ has two disjoint clusters $g_n$ and $g'_n$, and all vertices in $g_n$ are connected to all vertices in $g'_n$. We can check that the graphon $w_{g^*_n}$ converges to 
    \begin{align}\label{eq: graphon convolution classical}
        w^*=\begin{cases}
            w\left(\frac{x}{\lambda},\frac{y}{\lambda}\right);&\text{ if } (x,y)\in [0,\lambda)^2;\\
            w'\left(\frac{x-\lambda}{1-\lambda},\frac{y-\lambda}{1-\lambda}\right);& \text{ if }(x,y)\in [\lambda,1]^2\\
            1;& \text{ otherwise.}
        \end{cases}
    \end{align}
    Let $a_n$ be $g^*_n$-independent and identically distributed random variables with common law $a$, where $a$ is centered with variance one. Then, Theorem \ref{theorem: 1} implies that the law of
    \begin{align*}
        S_n=\frac{1}{\sqrt{n}}\sum_{k\in [n]}a_k 
    \end{align*}
    converges to $\mu_{w^*}$. On the other hand, we decompose
    \begin{align*}
        S_n=\sqrt{\frac{n_\lambda}{n}}\frac{1}{\sqrt{n_\lambda}}\sum_{k\in V(g_n)}a_k+\sqrt{\frac{n-n_\lambda}{n}}\frac{1}{\sqrt{n-n_\lambda}}\sum_{k\in V(g'_n)}a_k.
    \end{align*}
    The variables $a_k$ for $k\in V(g_n)$ are $g_n$-independent, and the variables $a_k$ for $k\in V(g'_n)$ are $g'_n$-independent. Finally, between them, they are classically independent. As $n_\lambda/n\to \lambda$, we deduce by Theorem \ref{theorem: 1} that the law of $S_n$ converges to
    \begin{align*}
        \sqrt{\lambda}\,\mu_w+\sqrt{1-\lambda}\,\mu_{w'}.
    \end{align*}
    Hence
    \begin{align*}
        \mu_{w^*}=\sqrt{\lambda}\,\mu_w+\sqrt{1-\lambda}\,\mu_{w'},
    \end{align*}
    by Carleman's condition.
    For the free convolution of $\mu_w$ and $\mu_{w'}$, consider again the graphs $g_n\to w$ and $g'_n\to w'$ defined above. We define the graph $\overline{g}^*_n$ over $[n]$ such that
    \begin{align*}
        E(\overline{g}^*_n)=\{(i,j)\in E(g_n)\}\sqcup \{(i,j)\in E(g'_n)\}.
    \end{align*}
    It follows that $w_{\overline{g}^*_n}$ converges towards
        \begin{align}\label{eq: graphon convolution free}
        \overline{w}^*=\begin{cases}
            w\left(\frac{x}{\lambda},\frac{y}{\lambda}\right);&\text{ if } (x,y)\in [0,\lambda)^2;\\
            w'\left(\frac{x-\lambda}{1-\lambda},\frac{y-\lambda}{1-\lambda}\right);& \text{ if }(x,y)\in [\lambda,1]^2;\\
            0; &\text{ otherwise.}
        \end{cases}
    \end{align}
    By an argument similar to the classical case, we can see that
    \begin{align*}
        \mu_{\overline{w}^*}=\sqrt{\lambda}\,\mu_w \boxplus \sqrt{1-\lambda}\,\mu_{w'}.
    \end{align*}
    The case of negative graphons follows similarly by making use of Theorem \ref{theorem: clt for negative graphons} (see Section~\ref{sec: negative graphon}) in place of Theorem~\ref{theorem: 1}.
\end{proof}
\begin{remark}\label{remark: construction convolution}
    Proposition \ref{proposition: mu_w is closed under cla/free} can also be proved by a direct computation using the definitions of $w^*$ and $\overline{w}^*$ in \eqref{eq: graphon convolution classical} and \eqref{eq: graphon convolution free}. For the classical convolution, this computation is reminiscent of the proof of \cite[Proposition 3.1]{lancien2024centrallimittheoremtensor}.
\end{remark}
The explicit construction of $\mu_{w}$ on the Fock space allows to get a concrete formula for a dual system (in the sense of Voiculescu~\cite{voiculescu1998analogues}), following the work of Miyagawa and Speicher~\cite{miyagawa2023dual} concerning $q$-Gaussians. It yields the following absolute continuity of $\mu_{w}$ (we refer to~\cite{miyagawa2023dual} and references therein for the definition of the free Fisher information and the definition of a dual system).
\begin{proposition}
Suppose that
\begin{align*}
    \sup_{(x,y)\in [0,1]^2}|w(x,y)|<1.
\end{align*}
Then, the free Fisher information of $\mu_{w}$ is finite. In particular, $\mu_{w}$ is absolutely continuous with respect to the Lebesgue measure, and its density is in $L^3(\mathbb{R})$. 
\end{proposition}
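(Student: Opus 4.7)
The plan is to adapt the construction of Miyagawa--Speicher \cite{miyagawa2023dual} from the $q$-Fock space to the graphon-deformed Fock space $\mathcal{F}_{T_w}$. Set $q := \sup_{(x,y)\in[0,1]^2}|w(x,y)|<1$, so that $\|T_w\|\leq q<1$. The goal is to exhibit a dual system for $A(1)=c(1)+c(1)^*$, namely a bounded operator $D$ on $\mathcal{F}_{T_w}$ satisfying the commutator identity $[D,A(1)]=P_\Omega$, where $P_\Omega$ is the rank-one projection onto the vacuum vector $\Omega$.

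Inspired by formula~\eqref{eq:cfs}, which expresses $c(1)^*$ on $H^{\otimes n}$ as $a(1)\cdot(1+T_1+T_1T_2+\cdots+T_1T_2\cdots T_{n-1})$, I would look for $D$ as a series of the form
\[
D=\sum_{k\geq 0}\alpha_k\, a(1)\, T_1T_2\cdots T_k,
\]
with coefficients $\alpha_k$ to be determined (and $\alpha_0=1$). On $H^{\otimes n}$ this is a finite sum, and the uniform estimate $\|T_1T_2\cdots T_k\|\leq q^k$ ensures that $D$ extends to a bounded operator on all of $\mathcal{F}_{T_w}$ with $\|D\|\leq\|a(1)\|/(1-q)$.

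The key step is then to compute $[D,A(1)]$ explicitly on a vector $g\in H^{\otimes n}$, combining formula~\eqref{eq:cfs} for $c(1)^*$ with the action of $c(1)$ (left tensoring by $1$) and the explicit expression
\[
[T_1T_2\cdots T_k\, g](x_1,\ldots,x_n)=\prod_{\ell=2}^{k}w(x_1,x_\ell)\, g(x_2,\ldots,x_{k+1},x_1,x_{k+2},\ldots,x_n)
\]
derived in the proof of the previous theorem. Matching coefficients level by level on $H^{\otimes n}$ and choosing $\alpha_k$ inductively so that all non-vacuum contributions telescope, one expects to reach $[D,A(1)]=P_\Omega$. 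The main obstacle will be the algebraic bookkeeping: unlike in \cite{miyagawa2023dual}, where the scalar $q$ commutes freely with the shift, here the factors $w(x_i,x_j)$ produced by successive $T_i$'s couple to the positions of the arguments, so the scalar recursion has to be replaced by a systematic use of the Yang--Baxter relation, and one must verify that the resulting identity is compatible with the (possibly degenerate) inner product $\langle\cdot,\cdot\rangle_{T_w}$.

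Once the dual system is in place, the associated conjugate variable $\xi$ for $A(1)$ is obtained by conditioning $D\Omega$ onto $W^*(A(1))$, and the norm bound on $D$ shows $\xi\in L^2(\tau)$. This yields $\Phi^*(\mu_w)<\infty$ in the sense of \cite{voiculescu1998analogues}. The absolute continuity of $\mu_w$ with respect to Lebesgue measure and the $L^3$-integrability of its density then follow from standard regularity results for measures with finite free Fisher information, completing the proof.
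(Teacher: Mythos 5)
Your general strategy is the same as the paper's (adapt the Miyagawa--Speicher dual-system construction and then invoke standard regularity consequences of finite free Fisher information), but your concrete ansatz for the dual operator $D$ is wrong. The easiest way to see this is to test it on the free case $w\equiv 0$: there all $T_i$ vanish, so your series collapses to $D=\alpha_0\,a(1)$, the \emph{left} free annihilation. A direct computation gives, for $g\in H^{\otimes n}$ with $n\geq 1$, $[a(1),c(1)+a(1)]g=[a(1),c(1)]g=g-1\otimes a(1)g\neq 0$, so $[D,A(1)]\neq P_\Omega$. In fact the correct dual operator in the free case is the free \emph{right} annihilation $a_r$ (defined by $a_rg(x_1,\dots,x_{n-1})=\int_0^1 g(x_1,\dots,x_{n-1},y)\,\mathrm{d}y$), for which one checks $[a_r,A(1)]=P_\Omega$ directly. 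So the starting building block must be a right, not a left, annihilation, and the combinatorics in the deformed case is considerably richer: the paper's $D$ is a sum over the Miyagawa--Speicher family $B(n+1)$ of incomplete pairings, with alternating signs $(-1)^{\pi(0)-1}$ and $w$-weights on the crossings of a modified intersection graph. Your geometric-series ansatz has no mechanism to produce this structure, and ``matching coefficients level by level'' will not telescope to $P_\Omega$.

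There is a second, independent gap: you assert that $D$ is bounded with $\|D\|\leq\|a(1)\|/(1-q)$, but the paper's $D$ is explicitly \emph{unbounded}, and the delicate step of the argument is precisely showing that $\Omega\in\mathrm{dom}(D^*)$, by bounding the functional $h\mapsto\langle Dh,\Omega\rangle_{T_w}$; this uses the boundedness of the free right annihilation operators on the undeformed Fock space (a result of Bożejko that the paper carefully points out applies at the required generality). If a bounded $D$ satisfying $[D,A(1)]=P_\Omega$ existed, that step would be vacuous, so you should be suspicious of the boundedness claim. Finally, you write that the conjugate variable is obtained by conditioning $D\Omega$ onto $W^*(A(1))$, but $D\Omega=0$ by construction; the relevant vector is $D^*\Omega$, which is exactly why membership of $\Omega$ in $\mathrm{dom}(D^*)$ is the crux. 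The last paragraph of your argument (finite free Fisher information $\Rightarrow$ absolute continuity with $L^3$ density) is correct and is also what the paper does, via Shlyakhtenko and Mingo--Speicher.
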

\begin{proof}
The proof follows the lines of that in \cite{miyagawa2023dual} with a few adaptations, which we outline below while omitting the lengthy details. 
The main ingredient is the explicit construction of a dual system for~$A(1)$ on the Fock space. As in~\cite{miyagawa2023dual}, the finiteness of the free Fisher information is a consequence of the existence of this dual system, thanks to \cite[Theorem~1]{shlyakhtenko2006remarks}. Moreover, by~\cite[Proposition 8.18]{mingo2017free}, the finiteness of the free Fisher information implies the absolute continuity with respect to the Lebesgue measure, and the fact that the density is in $L^3(\mathbb{R})$.

In order to define the dual operator of $A(1)$, we use below the notation of~\cite{miyagawa2023dual}. In particular, $B(n + 1)$ denotes a collection of incomplete pairings of $\{n>n-1>\cdots >1>0\}$
introduced in~\cite[Section 4]{miyagawa2023dual}, where the rules of counting crossings differ from the usual definition of crossings in $P_{1,2}(n+1)$. The new intersection graph $\tilde{f}_{\pi}$ of a partition $\pi \in B(n + 1)$ is the graph over the blocks of $\pi$ such that two
blocks are connected if they cross according to these new rules. For $\pi\in B(n+1)$, we denote by $p(\pi)$ the set of pairings in $\pi$, by $s(\pi)$ the singletons in $\pi$ and by $\pi(0)$ the integer which is coupled to $0$ in $\pi$.

Following~\cite[Proposition 4.5]{miyagawa2023dual}, we define an unbounded operator $D$ with the domain $\mathcal{F}=\bigoplus_{n=0}^\infty H^{\otimes n}$ by linear extension of $D\Omega=0$ and $Dh =\sum_{\pi \in B(n+1)}h_{\pi}$ whenever $h\in L^2([0,1]^n)$, where for any $\pi \in B(n+1)$ with $\ell$ singletons and $m$ pairings, $h_{\pi}$ is the element of $L^2([0,1])^{\ell}$ given by
$$h_{\pi}(x_i:i\in s(\pi))=(-1)^{\pi(0)-1}\int_{[0,1]^{m}}h(x_{\bar{n}},\ldots,x_{\bar{1}})\prod_{(u,v)\in E(\tilde{f}_{\pi})}w(x_u,x_v)\prod_{j\in p(\pi)}dx_j$$
(note that all the variables $x_i$ and $x_j$ are indexed by the blocks of $\tilde{f}_\pi$, and the blocks $\bar{n},\ldots,\bar{1}$ denote respectively the blocks of $n,\ldots,1$ in $\tilde{f}_\pi$). Using the explicit formula~\eqref{eq:cfs} of~$c(h)^*$ and following the proof of~\cite[Proposition 4.5]{miyagawa2023dual}, we get that the operator $D$ satisfies
$$D\cdot A(1)-A(1)\cdot D=P$$where $P$ is the orthogonal projection onto $\mathbb{C}\Omega$. In order to conclude, it remains to prove that $\Omega$ is in the domain of $D^*$, and again, the proof follows the one of~\cite[Theorem 4.6]{miyagawa2023dual}: it consists in showing that the linear functional $\langle D\ \cdot\ ,\Omega\rangle_{T_{w}}$ is bounded by computing $\langle Dh,\Omega\rangle_{T_{w}}$ for $h$ being any finite linear combination of pure vectors, and bounding it by a constant times $\|h\|_{T_{w}}$. One crucial ingredient in the proof is the fact that the free right annihilation operators are bounded on $\mathcal{F}=\bigoplus_{n=0}^\infty H^{\otimes n}$. This fact, which is the content of \cite[Lemma~2.2]{miyagawa2023dual}, remains valid in our context since~\cite[Theorem~6]{bozejko1998completely}, used to prove \cite[Lemma~2.2]{miyagawa2023dual}, is stated in a level of generality which applies to our setting.
\end{proof}
\section{Proof of Theorem~\ref{theorem: 1}}\label{sec: proof of main th}

 The following result provides the moment-cumulant formula for $\epsilon$-independence~\cite[Theorem~5.2]{speicherjanusz2016mixture}.
\begin{lemma}\label{lemma: speicher}
    Let $n,p \ge 1$, $g_n=([n],E)$ be a simple graph in $n$ nodes and $i \in [n]^p$. Let $\A_1,\ldots,\A_n$ be $g_n$-independent subalgebras, and $a_j \in \A_{i_j}$, for $j \in [p]$. Then
    \begin{align*}
        \tau(a_1\cdots a_p)=\sum_{\pi \in \NC(g_n,i)} \prod_{V=\{v_1,\ldots,v_m\} \in \pi} \cumuf_m(a_{v_1},\ldots,a_{v_m}).
    \end{align*}
\end{lemma}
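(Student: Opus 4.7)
The plan is to prove the formula by strong induction on $p$, leveraging the defining axioms of $\epsilon$-independence directly rather than invoking the free moment-cumulant formula on the ambient algebra (which would require matching partitions of very different types and is less clean). The base case $p=1$ is immediate from $\tau(a_1)=\cumuf_1(a_1)$ and $\NC(g_n,i)=\{\{\{1\}\}\}$.

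For the inductive step, I would first reduce by multilinearity in each $a_j$ to the case where every $a_j$ is centered (i.e.\ $\tau(a_j)=0$). The scalar parts $\tau(a_j)\cdot 1_\A$ correspond to singleton blocks on the right-hand side with $\cumuf_1(a_j)=\tau(a_j)$, and the bookkeeping matches after the inductive hypothesis is applied to the shorter words obtained by extracting scalars.

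For centered variables, I would split into two cases according to the structure of $i=(i_1,\ldots,i_p)$. \textbf{Case A (irreducible sequence):} Assume that for every $j_1<j_2$ with $i_{j_1}=i_{j_2}$ there exists $j_1<j_3<j_2$ with $(i_{j_1},i_{j_3})\notin E$. Then condition (2) in the definition of $\epsilon$-independence forces $\tau(a_1\cdots a_p)=0$. To match on the combinatorial side, I would establish the following key sublemma: if $i$ is irreducible then every $\pi\in \NC(g_n,i)$ must contain at least one singleton block. Centeredness of the $a_j$'s then makes each product $\prod_V\cumuf_V$ vanish. \textbf{Case B (reducible sequence):} There exist $j_1<j_2$ with $i_{j_1}=i_{j_2}$ such that $(i_{j_1},i_{j_3})\in E$ for every intermediate $j_3$. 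Then $a_{j_1}$ commutes with each $a_{j_3}$, and we may rewrite
\begin{align*}
\tau(a_1\cdots a_p) = \tau(a_1\cdots a_{j_1-1}(a_{j_1}a_{j_2})a_{j_1+1}\cdots \widehat{a_{j_2}}\cdots a_p),
\end{align*}
with $b:=a_{j_1}a_{j_2}\in \A_{i_{j_1}}$. Apply the inductive hypothesis to this word of length $p-1$ with index sequence $i'$, and then expand $b$ back using the product relation for free cumulants inside $\A_{i_{j_1}}$ (the standard formula expressing $\cumuf$ of a product-argument as a sum of $\cumuf$'s over refinements). The content here is that $(g_n,i)$-noncrossing partitions of $[p]$ are in natural bijection with refinements of $(g_n,i')$-noncrossing partitions where $j_1$ and $j_2$ are joined, so the two expansions match term by term.

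The main obstacle is the sublemma used in Case A: showing that no perfect (singleton-free) $(g_n,i)$-noncrossing partition exists when $i$ is irreducible. I expect to prove this by induction on $p$, considering the block of $\pi$ containing the first position $1$; if it is not a singleton, the next element $k>1$ with $k\sim_\pi 1$ together with the $(g_n,i)$-noncrossing condition forces all intermediate indices to be commuting with $i_1$, directly contradicting irreducibility. A secondary technical point is to verify the bijective bookkeeping in Case B, ensuring the product expansion of the free cumulant of a merged element reconstitutes exactly the $(g_n,i)$-sum including those partitions where $j_1$ and $j_2$ lie in different blocks, which requires handling via Möbius inversion on the interval $[\{\{j_1,j_2\}\},\hat 1]$ of the noncrossing partition lattice.
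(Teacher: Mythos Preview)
The paper does not prove this lemma; it is stated as a citation of \cite[Theorem~5.2]{speicherjanusz2016mixture} with no argument given. So there is no ``paper's proof'' to compare against, and your proposal is effectively an attempt to reprove the cited result from scratch. The overall inductive strategy---reduce to centered variables, then split into an irreducible case (both sides vanish) and a reducible case (commute, merge, induct, re-expand)---is sound and is close in spirit to how such moment--cumulant formulas are typically established. Two points deserve more care, however.

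First, your sketch of the sublemma in Case~A is not quite right. Looking at the block of position~$1$ and its next element $k$ does \emph{not} force $(i_1,i_m)\in E$ for every $m\in(1,k)$: those $m$ whose $\pi$-block lies entirely inside $(1,k)$ produce no crossing with the block of~$1$, so the noncrossing condition gives nothing. The clean fix is a minimal-gap argument: among all pairs $(j_1,j_2)$ that are consecutive in their $\pi$-block, pick one with $j_2-j_1$ minimal. Then every $m\in(j_1,j_2)$ has its block escaping $(j_1,j_2)$ (otherwise a smaller gap exists), so the $(g_n,i)$-noncrossing condition yields $(i_{j_1},i_m)\in E$ for all such $m$, exhibiting reducibility.

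Second, the bijection in Case~B is the heart of the argument, not a ``secondary technical point''. After merging $a_{j_1}a_{j_2}$ and applying the inductive hypothesis, you need the map
\[
\bigl(\sigma\in\NC(g_n,i'),\ \tilde\pi\ \text{from the product formula on the $b$-block}\bigr)\ \longleftrightarrow\ \pi\in\NC(g_n,i)
\]
to be a genuine bijection. This does hold, but checking that both directions preserve the $(g_n,i)$-noncrossing condition requires real work: merging the $\pi$-blocks of $j_1$ and $j_2$ into a single $\sigma$-block $V'$ can create new crossings with a block $W$ that neither original block had, and you must argue that either $W$ meets the interval $(j_1,j_2)$ (whence $(i_{j_1},i_W)\in E$ by your choice of reducible pair) or one of the original blocks already crossed $W$. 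The key structural fact you need is that, since $g_n$ is loopless, every intermediate index in $(j_1,j_2)$ differs from $i_{j_1}$, so the $b$-block of $\sigma$ contains no positions from $(j_1,j_2)$; this is what makes the ``noncrossing in merged order'' match ``noncrossing in original order'' on that block. Your reference to M\"obius inversion on $[\{\{j_1,j_2\}\},\hat 1]$ is not quite the right tool---what you want is the standard formula for free cumulants with a product argument (e.g.\ \cite[Theorem~11.12]{nica2006lectures}), which already packages the relevant combinatorics.
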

We are ready to prove Theorem \ref{theorem: 1}. To simplify the notation, we will omit the superscript $a_k:=a_k^{(n)}$.
\begin{proof}[Proof of Theorem \ref{theorem: 1}]
    We begin by writing
    \begin{align*}
        \tau(S_n^p)=\frac{1}{n^{p/2}}\sum_{i \in [n]^p}\tau(a_{i_1}\cdots a_{i_p}).
    \end{align*}
    We decompose the summation $i \in [n]^p$ according to the partition $\ker(i) \in P(p)$,
    \begin{align*}
        \tau(S_n^p)=\frac{1}{n^{p/2}}\sum_{\pi \in P(p)}\sum_{\substack{i \in [n]^p\\ \ker(i)=\pi}}\tau(a_{i_1}\cdots a_{i_p}).
    \end{align*}
    By Lemma \ref{lemma: speicher}, we have
    \begin{align*}
         \tau(a_{i_1}\cdots a_{i_p})=\sum_{\sigma \in NC(g_n,i)}\cumuf_\sigma(a_{i_1},\ldots,a_{i_p}).
    \end{align*}
    Let $\pi=\ker(i) \in P(p)$, and $\sigma \in \NC(g_n,i)$. By definition, we have $\sigma \le \pi$. If there exists $V=\{v\}\in \pi$, then we also have that $V \in \sigma$. In this case, we have
    \begin{align*}
        \cumuf_\sigma(a_{i_1},\ldots,a_{i_p})=\cumuf_1(a_{i_v})\cumuf_{\sigma\setminus\{V\}}\left((a_{i_j})_{j \ne v}\right)=0,
    \end{align*}
    since $\cumuf_1(a_{i_v})=\tau(a)=0$. Therefore, $\tau(a_{i_1}\cdots a_{i_p})=0$.
    We can then consider only the contribution of partitions $\pi \in P(p)$ such that $|V| \ge 2$ for any block $V \in \pi$. Let us denote them by $P_{\ge 2}(p)$. Then
    \begin{align*}
        \tau(S_n^p)=\frac{1}{n^{p/2}}\sum_{\pi \in P_{\ge 2}(p)}\sum_{\substack{i \in [n]^p\\ \ker(i)=\pi}}\tau(a_{i_1}\cdots a_{i_p}).
    \end{align*}
    Note that 
    \begin{align*}
        \#\{i \in [n]^p: \ker(i)=\pi\} \le n^{|\pi|}.
    \end{align*}
    By the Cauchy-Schwarz inequality, we have
    \begin{align*}
        \tau(a_{i_1}\cdots a_{i_p}) \le \tau(a^{2p})^{\frac{1}{2}}=:C_p,
    \end{align*}
    We deduce that
    \begin{align*}
        \frac{1}{n^{p/2}}\sum_{\substack{i \in [n]^p\\ \ker(i)=\pi}}\tau(a_{i_1}\cdots a_{i_p}) \le C_pn^{|\pi|-p/2},
    \end{align*}
    Therefore, if there exists $V \in \pi$ such that $|V| \ge 3$, we have $|\pi|<p/2$ and its contribution is negligible. Thus, we deduce that
    \begin{align*}
        \lim_{n \to \infty} \tau(S_n^p)=\sum_{\pi \in P_2(p)}\lim_{n \to \infty} \frac{1}{n^{p/2}}\sum_{\substack{i \in [n]^p\\ \ker(i)=\pi}}\tau(a_{i_1}\cdots a_{i_p}),
    \end{align*}
    as long as the limit on the right-hand side exists. We write again by Lemma \ref{lemma: speicher}
    \begin{align*}
        \tau(a_{i_1}\cdots a_{i_p})=\sum_{\sigma \in NC(g_n,i)}\cumuf_\sigma(a_{i_1},\ldots,a_{i_p}).
    \end{align*}
    
    Let $\sigma \in \NC(g_n,i)$ and $\ker(i)=\pi \in P_2(p)$. By definition, $\sigma \le \pi$ (that is, if $k,l$ are in the same block of $\sigma$, then they are also in the same block of $\pi$). Since the variables $a_{i_l}$ are centered, whenever there exists a single block $V=\{v\}\in \sigma$, we have
    \begin{align*}
        \cumuf_\sigma(a_{i_1},\ldots,a_{i_p})=\cumuf_1(a_{i_v})\cumuf_{\sigma\setminus\{V\}}\left((a_{i_j})_{j \ne v}\right)=0,
    \end{align*}
    since $\cumuf_1(a_{i_v})=\tau(a)=0$. Let us denote
    \begin{align*}
        \NC_{\ge 2}(g_n,i)=\{\sigma \in \NC(g_n,i): |V| \ge 2, \ \forall\, V \in \sigma\}.
    \end{align*}
    Then, we deduce that
    \begin{align*}
        \tau(a_{i_1}\cdots a_{i_p})=\sum_{\sigma \in NC_{\ge 2}(g_n,i)}\cumuf_\sigma(a_{i_1},\ldots,a_{i_k}).
    \end{align*}
    However, as $\pi \in P_2(p)$ and any $\sigma \in \NC_{\ge 2}(g_n,i)$ satisfies $\sigma \le \pi$, we must have $\sigma=\pi$. That is, we either have $\NC_{\ge 2}(g_n,i)=\varnothing$ or $\pi \in \NC(g_n,i)$ and $\NC_{\ge 2}(g_n,i)=\{\pi\}$.
    In the latter, if $\pi \in \NC(g_n, i)$, we get that
    \begin{align*}
        \cumuf_\pi(a_{i_1},\ldots,a_{i_p})=\prod_{V \in \pi} \cumuf_2(a)=1.
    \end{align*}
    Therefore, for $\ker(i)=\pi \in P_2(p)$, we have
    \begin{align*}
        \tau(a_{i_1}\cdots a_{i_p})=\mathbf{1}_{\pi \in NC(g_n,i)},
    \end{align*}
    and
    \begin{align*}
        \lim_{n \to \infty} \tau(S_n^p)=\sum_{\pi \in P_2(p)}\lim_{n \to \infty} \frac{1}{n^{p/2}}\#\{i \in [n]^p: \ker(i)=\pi \in \NC(g_n,i)\}.
    \end{align*}
    Recall that we define the intersection graph $f_\pi$ of $\pi$ by the graph over the blocks of $\pi$, and there is an edge between two blocks if they cross. We immediately see that
    \begin{align*}
        n^{p/2}=|V(g_n)|^{|V(f_\pi)|}.
    \end{align*}
    Moreover, if $V_1=\{v_1,u_1\} \in \pi$ crosses $V_2=\{v_2,u_2\}\in \pi_2$, the condition $\pi \in \NC(g_n,i)$ implies that $(i_{v_1}i_{v_2}) \in E$ (see \eqref{eq: def-non-crossing eps partitions}). In other words, if $(V_1V_2)$ is an edge in $f_\pi$, then $(i_{v_1}i_{v_2})$ is also an edge in $g_n$. Therefore, the map $V=\{v_1,u_1\} \mapsto i_{v_1}=i_{u_1}$ is an (injective) homomorphism from the graph $f_\pi$ to $g_n$. We then have that
    \begin{multline*}
        \{i \in [n]^p: \ker(i)=\pi \in \NC(g_n,i)\}\\=\{\phi: V(f_\pi)\to V(g_n): \phi \text{ is an injective homomorphism}\}.
    \end{multline*}
    Let us denote
    \begin{align*}
        \rho_{\operatorname{inj}}(f,g)=\frac{1}{|V(g)|^{|V(f)|}}\#\{\phi: V(f)\to V(g): \phi \text{ is an injective homomorphism}\}.
    \end{align*}
    Then
    \begin{align*}
        \lim_{n \to \infty} \tau(S_n^p)=\sum_{\pi \in P_2(p)}\lim_{n \to \infty} \rho_{\operatorname{inj}}(f_\pi,g_n).
    \end{align*}   
    As $n\to \infty$, the homomorphism densities $\rho(f_\pi,g_n)$ and $\rho_{\operatorname{inj}}(f_\pi,g_n)$ are indistinguishable (see Inequality 5.21 in \cite{lovasz2012largenetworks} and Lemma \ref{lemma: diff between homo and homoinj} below) and we have
    \begin{align*}
        |\rho(f_\pi,g_n)-\rho_{\operatorname{inj}}(f_\pi,g_n)|\le \frac{C_{f_\pi}}{n}\to 0,
    \end{align*}
    where $C_{f_\pi}$ is a constant that depends only on $f_\pi$.
    We deduce that
    \begin{align*}
        \lim_{n \to \infty} \tau(S_n^p)=\sum_{\pi \in P_2(p)}\lim_{n \to \infty} \rho(f_\pi,g_n).
    \end{align*}
    Since $g_n \to w$, we deduce that
    \begin{align*}
        \lim_{n \to \infty} \tau(S_n^p)=\sum_{\pi \in P_2(p)} \rho(f_\pi,w),
    \end{align*}
    and the result follows.
\end{proof}
\begin{remark}
    The usual definition of $\rho_{\operatorname{inj}}(f,g)$ requires the normalization
    \begin{multline*}
        \hat{\rho}_{\operatorname{inj}}(f,g)\\=\frac{1}{n(n-1)\cdots (n-k+1)}\#\{\phi: V(f)\to V(g): \phi \text{ is an injective homomorphism}\},
    \end{multline*}
    where $n=|V(g)|$ and $k=|V(f)|$. However, both definitions are asymptotically equivalent as 
    \begin{align*}
        \lim_{n\to \infty}\frac{n^k}{n(n-1)\cdots (n-k+1)}=1,
    \end{align*}
    for any $k$ fixed.
\end{remark}

\begin{proof}[Proof of Proposition \ref{proposition: bound on the norm of the limit}]
    By definition of $\mu_w$ and $q$, we get that for any even $p$ 
    \begin{align*}
        \int x^p\,\text{d}\mu_w=\sum_{\pi \in P_2(p)}\int_{[0,1]^{V(f_\pi)}}\prod_{(u,v)\in E(f_\pi)}w(x_u,x_v)\,\text{d}x\le \sum_{\pi \in P_2(p)}\int_{[0,1]^{V(f_\pi)}}\prod_{(u,v)\in E(f_\pi)}q\,\text{d}x.
    \end{align*}
    Hence
    \begin{align*}
        \int x^p\,\text{d}\mu_w\le \sum_{\pi \in P_2(p)}q^{|E(f_\pi)|}=\int x^p\,\text{d}\mu_q,
    \end{align*}
    where $\mu_q$ is the $q$-Gaussian distribution. It follows that
    \begin{align*}
        \mu_w^\infty\le \mu_q^\infty,
    \end{align*}
    where for a measure $\mu$, we denote
    \begin{align*}
        \mu^\infty:=\lim_{p\to \infty}\left(\int x^{2p}\,\text{d}\mu\right)^{1/2p}.
    \end{align*}
When $q<1$, the measure $\mu_q$ is bounded \cite{boejko1997,bozejko1991example,bozejko1994completely}, and its support is contained in 
\begin{align*}
    I_q:=[-2/\sqrt{1-q},2/\sqrt{1-q}].
\end{align*}
    Since the support of a measure $\mu$ is contained in the interval $[-\mu^\infty,\mu^\infty]$, we deduce that $\mu_w$ is also bounded and its support is included in $I_q$. 
\end{proof}

\section{The non-i.i.d case}\label{sec: non i.i.d case}
The goal of this section is to extend Theorem \ref{theorem: 1} to the case of non-identically distributed random variables. 

We define a (node)-weighted graph $(g_n,\alpha_n)$ as a pair consisting of a graph $g_n=([n],E(g_n))$ and a node-weight $\alpha_n: V(g_n)\to [0,\infty)$.
Let
\begin{align*}
    \vert\alpha_n\vert :=\sum_{i\in [n]}\alpha_n(i).
\end{align*}
For a simple graph $f$, we define the homomorphism density 
\begin{align*}
    \rho(f,(g_n,\alpha_n))=\frac{1}{\vert \alpha_n\vert^{|V(f)|}}\sum_{\phi \in \Homo(f,g_n)}\prod_{v\in V(f)}\alpha_n(\phi(v)),
\end{align*}
where $\Homo(f,g_n)$ is the set of all homomorphism functions from $V(f)$ to $V(g_n)$. The graph parameter $\rho(f,g)$ can be recovered when we set $\alpha\equiv 1$. 
Moreover, we can define the graphon $w_{g,\alpha}$ by setting
\begin{align*}
    &w_{g,\alpha}(x,y)=A_{g}(i,j),
\end{align*}
whenever
\begin{align*}
    &\alpha(1)+\cdots+\alpha(i-1)\le \vert \alpha\vert  x<\alpha(1)+\cdots+\alpha(i)\\
    &\alpha(1)+\cdots+\alpha(j-1)\le \vert \alpha\vert y<\alpha(1)+\cdots+\alpha(j).
\end{align*}
It is not difficult to check that $\rho(f,w_{g,\alpha})=\rho(f,(g,\alpha))$.

Let $\lambda$ be the Lebesgue measure in $[0,1]$ and let $\mathcal{M}$ be the set of all couplings of $(\lambda,\lambda)$, namely, all measures $\mu$ in $[0,1]^2$ whose marginals are $\lambda$. Given two graphons $w_1$ and $w_2$, we define the (general) cut distance $\delta_\Box (w_1,w_2)$  \cite{lovasz2008convergence1,lovasz2012convergence2}
\begin{align*}
    \delta_\Box (w_1,w_2)=\inf_{\mu \in \mathcal{M}}\sup_{S\times T\subseteq [0,1]^2}\left|\int_{S\times T}\left(w_1(x,y)-w_2(u,v)\right)\mathbf{1}_{(x,u)\in T}\mathbf{1}_{(y,v)\in S}\text{d}\mu(x,u)\text{d}\mu(y,v)\right|.
\end{align*}
It was proved in \cite{lovasz2008convergence1} that $\rho(f,(g_n,\alpha_n))$ converges if and only if $w_{g_n,\alpha_n}$ is a Cauchy sequence in the metric $\delta_\Box$. In this case, there exists a limit $w$ of the graphons $w_{g_n,\alpha_n}$ \cite[Corollary~3.9]{lovasz2008convergence1} and we have
\begin{align*}
    \rho(f,(g_n,\alpha)) \to \rho(f,w),
\end{align*}
for any simple graph $f$. Before stating the main result of this section, we need the following lemma. Let $\rho_{\operatorname{inj}}(f,(g,\alpha))$ be the (injective) homomorphism density
\begin{equation}\label{eq: def inj-hom-weight}
    \rho_{\operatorname{inj}}(f,(g,\alpha))=\frac{1}{\vert \alpha\vert^{|V(f)|}}\sum_{\phi \in \Homo_{\operatorname{inj}}(f,g)}\prod_{v\in V(f)}\alpha(\phi(v)),
\end{equation}
where $\Homo_{\operatorname{inj}}(f,g)$ is the set of injective homomorphism maps from $V(f)$ to $V(g)$. For a node-weight map $\alpha:V(g)\to [0,\infty)$, we define
\begin{align*}
    \alpha_{\max}=\max_{v\in V(g)}\alpha(v).
\end{align*}
\begin{lemma}\label{lemma: diff between homo and homoinj}
    For any simple graph $f$ and node-weighted graph $(g,\alpha)$ with $|V(g)|\ge |V(f)|$, we have
    \begin{align*}
        |\rho(f,(g,\alpha))-\rho_{\operatorname{inj}}(f,(g,\alpha))|\le \frac{C_f\alpha_{\max}}{\vert\alpha\vert},
    \end{align*}
    where $C_f$ is a constant that depends only on $f$.
\end{lemma}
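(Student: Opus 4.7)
The plan is to exploit the fact that $\rho(f,(g,\alpha)) - \rho_{\operatorname{inj}}(f,(g,\alpha))$ counts (with weights) precisely the \emph{non-injective} homomorphisms from $f$ to $g$, which form a lower-order term under the normalization by $|\alpha|^{|V(f)|}$. The bound should follow from a simple union-bound over pairs of vertices of $f$ that get identified.

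First, I would subtract the two definitions to write
\begin{equation*}
\rho(f,(g,\alpha)) - \rho_{\operatorname{inj}}(f,(g,\alpha)) = \frac{1}{|\alpha|^{|V(f)|}} \sum_{\phi \in \Homo(f,g) \setminus \Homo_{\operatorname{inj}}(f,g)} \prod_{v \in V(f)} \alpha(\phi(v)).
\end{equation*}
Since both quantities are nonnegative and the difference is nonnegative, I can upper bound by dropping the homomorphism constraint and summing over all maps $\phi: V(f) \to V(g)$ that fail to be injective. Any such $\phi$ has $\phi(u) = \phi(v)$ for some pair $u \neq v \in V(f)$, so by a union bound
\begin{equation*}
\sum_{\phi \text{ non-inj.}} \prod_{v \in V(f)} \alpha(\phi(v)) \le \sum_{\{u,v\} \subset V(f)} \sum_{\substack{\phi: V(f)\to V(g)\\ \phi(u)=\phi(v)}} \prod_{w \in V(f)} \alpha(\phi(w)).
\end{equation*}

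For each fixed unordered pair $\{u,v\}$, the inner sum factorizes: the shared image $\phi(u)=\phi(v)$ contributes $\sum_{x \in V(g)} \alpha(x)^2$, and each of the remaining $|V(f)|-2$ vertices independently contributes a factor of $|\alpha|$. Then I use $\sum_{x} \alpha(x)^2 \le \alpha_{\max} \sum_x \alpha(x) = \alpha_{\max} |\alpha|$ to get
\begin{equation*}
\sum_{\phi \text{ non-inj.}} \prod_{v \in V(f)} \alpha(\phi(v)) \le \binom{|V(f)|}{2} \alpha_{\max} |\alpha|^{|V(f)|-1}.
\end{equation*}
Dividing by $|\alpha|^{|V(f)|}$ yields the claim with $C_f = \binom{|V(f)|}{2}$.

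There is no real obstacle here; the only thing to be careful about is the normalization factor $|\alpha|^{|V(f)|}$ versus $|\alpha|^{|V(f)|-1}$, which is exactly what produces the $\alpha_{\max}/|\alpha|$ factor on the right-hand side. Note that the constant $C_f$ could equally well be chosen as $|V(f)|^2$ if one prefers a looser but simpler bound that arises from summing over ordered pairs $(u,v)$.
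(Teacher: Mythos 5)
Your proof is correct and follows essentially the same route as the paper: both bound the defect by the weighted count of non-injective maps, apply a union bound over the pair of vertices that get identified, extract one factor of $\alpha_{\max}$ from the collision, and bound the remaining $|V(f)|-1$ factors by $|\alpha|^{|V(f)|-1}$, yielding $C_f=\binom{|V(f)|}{2}$. Your version is slightly more explicit about the factorization of the inner sum into $\bigl(\sum_x \alpha(x)^2\bigr)\,|\alpha|^{|V(f)|-2}$, but the argument is the same.
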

\begin{proof}
    Let $k=|V(f)|$ and $n=|V(g)|$. Denote by $\Homo_{\operatorname{ninj}}(f,g)$ the set of non-injective homomorphism maps from $V(f)$ to $V(g)$. Hence
    \begin{align*}
        \gamma:&=\sum_{\phi \in \Homo(f,g)}\prod_{v\in V(f)}\alpha_n(\phi(v))-\sum_{\phi \in \Homo_{\operatorname{inj}}(f,g)}\prod_{v\in V(f)}\alpha_n(\phi(v))\\
        &=\sum_{\phi \in \Homo_{\operatorname{ninj}}(f,g)}\prod_{v\in V(f)}\alpha_n(\phi(v)).
    \end{align*}
    Note that any $\phi\in \Homo_{\operatorname{ninj}}(f,g)$ fixes at least two points, namely, there exist distinct $v_1,v_2\in V(f)$ such that $\phi(v_1)=\phi(v_2). $
    Therefore, the set $\{\phi(v):v\in V(f)\}$ has at most $k-1$ elements. Hence,
    \begin{align*}
        \gamma\le C_f\alpha_{\max}\sum_{v\in [n]^{k-1}}\prod_{l\in [k-1]}\alpha_n(v_l),
    \end{align*}
    where $C_{f}=\binom{|V(f)|}{2}$ is the number of ways of picking $v_1,v_2$. We note that
    \begin{align*}
        \sum_{v\in [n]^{k-1}}\prod_{l\in [k-1]}\alpha_n(v_l)=\vert\alpha\vert^{k-1}.
    \end{align*}
    Therefore, we conclude that
    \begin{equation*}
        |\rho(f,(g,\alpha))-\rho_{\operatorname{inj}}(f,(g,\alpha))|= \frac{\gamma}{\vert\alpha\vert^{k}}\le \frac{C_f\alpha_{\max}}{\vert\alpha\vert}.
        \qedhere
    \end{equation*} 
\end{proof}

We are now ready to state the main result of this section.
\begin{theorem}\label{theorem: non i.i.d}
    Let $(\sigma_n)_{n\ge 1}$ be positive numbers. Let $(g_n)_{n\ge 1}$ be simple graphs where $V(g_n)=[n]$ and consider the weighted graph $(g_n,\sigma^2)$, where $\sigma^2(i)=\sigma_i^2$. Assume that $(g_n,\sigma^2)_n$ converges to a graphon $w$, and that
    \begin{align*}
        &\vert \sigma^2\vert =\sum_{k\in [n]}\sigma_i^2 \to \infty; \quad \frac{\sigma_{\max}^2}{\vert \sigma^2\vert}\to 0.
    \end{align*}
    Let $a_1^{(n)},\ldots,a_n^{(n)} \in \A$ be $g_n$-independent centered random variables with variance $\var(a_k^{(n)})=\sigma_k^2$, for every $n \ge 1$. 
    Assume that there exist constants $C_p\ge 1$ that depends only on $p\ge 1$ such that for any $n,k,p\ge 1$, we have 
    \begin{align*}
       \tau\left(|a_k^{(n)}|^p\right)\le C_p\sigma_k^p.
    \end{align*}
    Then
    \begin{align*}
        S_n=\frac{1}{\sqrt{\vert\sigma^2\vert}}\sum_{k \in [n]} a_k^{(n)}
    \end{align*}
    converges in distribution to $\mu_w$, the law defined in \eqref{eq: definition of mu_w}.
\end{theorem}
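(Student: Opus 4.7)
The plan is to follow the proof of Theorem~\ref{theorem: 1} step by step, with two key modifications: replace the Cauchy--Schwarz bound $\tau(a_{i_1}\cdots a_{i_p})\le \tau(a^{2p})^{1/2}$ by a noncommutative Hölder estimate that uses the individual variances, and recognize the sum over pair partitions as a weighted injective homomorphism density. First I expand
\begin{align*}
    \tau(S_n^p)=\frac{1}{\vert\sigma^2\vert^{p/2}}\sum_{i\in [n]^p}\tau(a_{i_1}^{(n)}\cdots a_{i_p}^{(n)})
\end{align*}
and decompose the sum according to $\ker(i)\in P(p)$. Applying Lemma~\ref{lemma: speicher}, the argument of Theorem~\ref{theorem: 1} showing that singletons of $\ker(i)$ force $\tau(a_{i_1}\cdots a_{i_p})=0$ (via $\cumuf_1(a_{i_v})=\tau(a_{i_v})=0$) carries over verbatim, reducing the sum to $\pi\in P_{\ge 2}(p)$.

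Second, I estimate the contribution of partitions $\pi\in P_{\ge 2}(p)\setminus P_2(p)$. The noncommutative Hölder inequality combined with the moment bound $\tau(\vert a_k^{(n)}\vert^p)\le C_p\sigma_k^p$ gives
\begin{align*}
    \vert\tau(a_{i_1}\cdots a_{i_p})\vert\le \prod_{j=1}^p \tau(\vert a_{i_j}^{(n)}\vert^p)^{1/p}\le C_p\prod_{j=1}^p \sigma_{i_j}.
\end{align*}
For $\ker(i)=\pi$ with blocks $V$, this equals $C_p\prod_{V\in \pi}\sigma_{i_{v_1}}^{\vert V\vert}$, and summing over such $i$ is bounded by $\prod_{V\in \pi}\sum_{j\in [n]}\sigma_j^{\vert V\vert}\le \prod_{V\in \pi}\sigma_{\max}^{\vert V\vert-2}\vert\sigma^2\vert = \sigma_{\max}^{p-2\vert\pi\vert}\vert\sigma^2\vert^{\vert\pi\vert}$. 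After dividing by $\vert\sigma^2\vert^{p/2}$, the total contribution of $\pi$ is bounded by $C_p(\sigma_{\max}^2/\vert\sigma^2\vert)^{p/2-\vert\pi\vert}$; since $\vert\pi\vert<p/2$ for $\pi\notin P_2(p)$, the assumption $\sigma_{\max}^2/\vert\sigma^2\vert\to 0$ forces this contribution to vanish.

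Third, for $\pi\in P_2(p)$ the argument in the proof of Theorem~\ref{theorem: 1} shows that $\tau(a_{i_1}\cdots a_{i_p})$ vanishes unless $\pi\in NC(g_n,i)$, in which case only the cumulant $\cumuf_\pi$ contributes and gives $\prod_{V=\{v_1,v_2\}\in \pi}\cumuf_2(a_{i_{v_1}}^{(n)})=\prod_{V\in \pi}\sigma_{i_{v_1}}^2$. The map $V=\{v_1,v_2\}\mapsto i_{v_1}$ identifies the set of such $i$'s with injective homomorphisms $\phi:V(f_\pi)\to V(g_n)$, weighted by $\prod_v\sigma_{\phi(v)}^2$, so
\begin{align*}
    \frac{1}{\vert\sigma^2\vert^{p/2}}\sum_{\substack{i\in [n]^p\\ \ker(i)=\pi}}\tau(a_{i_1}^{(n)}\cdots a_{i_p}^{(n)})=\rho_{\operatorname{inj}}(f_\pi,(g_n,\sigma^2)).
\end{align*}
Applying Lemma~\ref{lemma: diff between homo and homoinj} yields $\vert\rho_{\operatorname{inj}}(f_\pi,(g_n,\sigma^2))-\rho(f_\pi,(g_n,\sigma^2))\vert\le C_{f_\pi}\sigma_{\max}^2/\vert\sigma^2\vert\to 0$, and the graphon convergence $(g_n,\sigma^2)\to w$ produces the limit $\rho(f_\pi,w)$. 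Summing over $\pi\in P_2(p)$ gives $\lim_n \tau(S_n^p)=\int x^p\,\text{d}\mu_w$, and convergence in distribution follows from Carleman's condition via Proposition~\ref{proposition: bound on the norm of the limit} applied to $q=\sup w\le 1$ (with a separate argument when $q=1$ using the fact that $\mu_w$ is determined by its moments).

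The main obstacle is the bound in the second step: in the identically distributed case a single Cauchy--Schwarz estimate sufficed, but here one must carefully track heterogeneous variances across blocks of different sizes. The factorization $\sum_{i:\ker(i)=\pi}\prod_j\sigma_{i_j}\le \prod_{V\in \pi}\sum_j\sigma_j^{\vert V\vert}$ together with the elementary inequality $\sum_j\sigma_j^{\vert V\vert}\le \sigma_{\max}^{\vert V\vert-2}\vert\sigma^2\vert$ is precisely what converts the Lindeberg-type hypothesis into the required decay factor $(\sigma_{\max}^2/\vert\sigma^2\vert)^{p/2-\vert\pi\vert}$, and it is the use of the $p$-th moment bound (rather than just the variance) that makes this noncommutative Hölder argument possible.
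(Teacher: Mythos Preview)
Your proof is correct and follows essentially the same route as the paper's. The only cosmetic difference is that the paper normalizes $z_k=a_k^{(n)}/\sigma_k$ upfront (so the moment bound reads $|\tau(z_{i_1}\cdots z_{i_p})|\le C_p$), whereas you apply the noncommutative H\"older inequality directly to the $a_k^{(n)}$; the resulting estimates and the identification with $\rho_{\operatorname{inj}}(f_\pi,(g_n,\sigma^2))$ are identical.
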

\begin{proof}
   The proof follows the steps of that of Theorem \ref{theorem: 1}. For ease of notation, we will drop the superscript $n$ and write $a_k$ for $a_k^{(n)}$. We denote $z_k=a_k/\sigma_k$.
    For any integer $p\ge 1$, we have that
    \begin{align*}
        \tau(S_n^p)=\sum_{\pi \in P(p)}\frac{1}{\vert\sigma^2\vert^{p/2}}\sum_{\substack{i\in [n]^p\\ \ker (i)=\pi}}\sigma_{i_1}\cdots \sigma_{i_p}\tau(z_{i_1}\cdots z_{i_p}).
    \end{align*}
    As we have $\tau(z_{i_1}\cdots z_{i_p})=0$ for all partitions $\pi$ that have a block $V=\{v\}$, we get that
    \begin{align*}
        \tau(S_n^p)=\sum_{\pi \in P_{\ge 2}(p)}\frac{1}{\vert\sigma^2\vert^{p/2}}\sum_{\substack{i\in [n]^p\\ \ker (i)=\pi}}\sigma_{i_1}\cdots \sigma_{i_p}\tau(z_{i_1}\cdots z_{i_p}).
    \end{align*}
    Using the moment assumption on the $a_k$'s, we have $|\tau(z_{i_1}\cdots z_{i_p})|\le C_p$. Hence
    \begin{align*}
        \left|\sum_{\substack{i\in [n]^p\\ \ker (i)=\pi}}\sigma_{i_1}\cdots \sigma_{i_p}\tau(z_{i_1}\cdots z_{i_p})\right|\le C_p\sum_{\substack{i\in [n]^p\\ \ker (i)=\pi}}\sigma_{i_1}\cdots \sigma_{i_p}.
    \end{align*}
    We note that for any $k\ge 2$, we have
    \begin{align*}
        \sum_{i\in [n]}\sigma_i^k \le \sigma_{\max}^{k-2} \sum_{i\in [n]}\sigma_i^2=\sigma_{\max}^{k-2}\vert\sigma^2\vert.
    \end{align*}
    Hence, for any $\pi\in P_{\ge 2}(p)$, we have
    \begin{align*}
        \left|\sum_{\substack{i\in [n]^p\\ \ker (i)=\pi}}\sigma_{i_1}\cdots \sigma_{i_p}\tau(z_{i_1}\cdots z_{i_p})\right|&\le C_p\prod_{V\in \pi}\left(\sum_{i\in [n]}\sigma_i^{|V|}\right)\\
        &\le C_p\prod_{V\in \pi}\left(\sigma_{\max}^{|V|-2}\vert\sigma^2\vert\right).
    \end{align*}
    Since $\sum_{V\in \pi}|V|=p$, we deduce that
    \begin{align*}
        \left|\frac{1}{\vert\sigma^2\vert^{p/2}}\sum_{\substack{i\in [n]^p\\ \ker (i)=\pi}}\sigma_{i_1}\cdots \sigma_{i_p}\tau(z_{i_1}\cdots z_{i_p})\right|\le C_{p}\left(\frac{\sigma_{\max}^2}{\vert\sigma^2\vert}\right)^{p/2-|\pi|}.
    \end{align*}
    Let $\pi \in P_{\ge 2}(p)$. If there exists $V\in \pi$ such that $|V|\ge 3$, we get that $|\pi|-p/2<0$. Since  $\sigma_{\max}^2/\vert\sigma^2\vert$ converges to zero as $n$ goes to infinity, we get in that case 
    \begin{align*}
        \frac{1}{\vert\sigma^2\vert^{p/2}}\sum_{\substack{i\in [n]^p\\ \ker (i)=\pi}}\sigma_{i_1}\cdots \sigma_{i_p}\tau(z_{i_1}\cdots z_{i_p}) \to 0.
    \end{align*}
    We deduce that
    \begin{align*}
        \lim_{n\to \infty}\tau(S_n^p)=\sum_{\pi \in P_2(p)}\lim_{n\to \infty}\frac{1}{\vert\sigma^2\vert^{p/2}}\sum_{\substack{i\in [n]^p\\ \ker (i)=\pi}}\sigma_{i_1}\cdots \sigma_{i_p}\tau(z_{i_1}\cdots z_{i_p}).
    \end{align*}
    The variables $z_k$ are centered with variance one, hence
    \begin{align*}
        \tau(z_{i_1}\cdots z_{i_p})=\mathbf{1}_{\pi \in \NC_2(g_n,i)}.
    \end{align*}
    In particular, it is equal to one if and only if the map $V=\{u,v\}\mapsto i_{u}=i_v$ is an (injective) homomorphism from the graph $f_\pi$ to the graph $g_n$. Hence
    \begin{align*}
        \sum_{\substack{i\in [n]^p\\ \ker (i)=\pi}}\sigma_{i_1}\cdots \sigma_{i_p}\tau(z_{i_1}\cdots z_{i_p})=\sum_{\phi \in \Homo_{\operatorname{inj}}(f_\pi,g_n)}\prod_{v\in V(f_\pi)}\sigma_{\phi(v)}^2.
    \end{align*}
  In view of \eqref{eq: def inj-hom-weight}, we can write
    \begin{align*}
        \lim_{n\to \infty}\tau(S_n^p)=\sum_{\pi \in P_2(p)}\lim_{n\to \infty}\rho_{\operatorname{inj}}(f_\pi,(g_n,\sigma^2)).
    \end{align*}
    Following Lemma~\ref{lemma: diff between homo and homoinj}, the homomorphism densities $\rho_{\operatorname{inj}}(f,(g_n,\sigma^2))$ and $\rho(f,(g_n,\sigma^2))$ are asymptotically indistinguishable since $\sigma_{\max}^2/\vert\sigma^2\vert$ goes to zero. As $(g_n,\sigma^2) \to w$, we deduce that
    \begin{align*}
        \lim_{n\to \infty}\tau(S_n^p)=\sum_{\pi \in P_2(p)}\rho(f_\pi,w),
    \end{align*}
    and the result follows.
\end{proof}
\begin{remark}
    In classical and free probability, the condition $\sigma_{\max}^2/|\sigma^2|$ is related to the Liapunov condition: for some $\delta >0$, we have
    \begin{align*}
        \frac{1}{|\sigma^2|^{1+\delta/2}}\sum_{k\in [n]}\tau(|a_k|^{2+\delta})\to 0.
    \end{align*}
    In particular, if a sequence of independent centered random variables $a_k$ satisfies the Liapunov condition for some $\delta>0$, Holder's inequality implies that
    \begin{align*}
        \frac{\sigma_{\max}^{2(1+\delta/2)}}{|\sigma^2|^{1+\delta/2}}\le \frac{1}{|\sigma^2|^{1+\delta/2}}\sum_{k\in [n]}\tau\left(|a_k|^{2+\delta}\right)\to 0.
    \end{align*}
    Therefore, Theorem \ref{theorem: non i.i.d} implies Liapunov central limit theorem under the uniform bound on the variables $a_k$ for classical independent random variables ($w_{g,\sigma^2}\equiv 1$), and its free analog ($w_{g,\sigma^2}\equiv 0$); see \cite{chistyakov2008}.
\end{remark}

\section{Multivariate case}\label{sec: multivariate case}
The goal of this section is to extend Theorem \ref{theorem: 1} to the multivariate case; namely, we study the joint convergence of
\begin{align*}
    S_n^{(l)}=\frac{1}{\sqrt{n}}\sum_{k\in [n]}a_k^{(l)},
\end{align*}
where $(a_k^{(l)})_{k\in [n],l\in [L]}$ are $g_n$-independent identically distributed random variables according to some graph $g_n$ over $[n]\times [L]$. We will make use of the notion of decorated graphons from \cite{lovasz2022multigraph} in order to adapt the graphon convergence to the above setting. 

Let $L\ge 1$ and $\mathcal{M}_L=\mathcal{M}_L([0,1])$ be the space of $[0,1]$-valued $L\times L$ matrices. We denote the inner product  and Hilbert-Schmidt norm by 
\begin{align*}
    &\langle S,T\rangle=\tr(ST^*)=\sum_{i,j\in [L]}S_{ij}T_{ij};\\
    &\norm{S}_{\operatorname{HS}}^2=\sum_{i,j\in [L]}S_{ij}^2.
\end{align*}
We define an $\mathcal{M}_L$-decorated graphon $w$ as a map $w:[0,1]^2\to \mathcal{M}_L$ satisfying the symmetry condition $\norm{w(x,y)}_{\operatorname{HS}}=\norm{w(y,x)}_{\operatorname{HS}}$ for every $x,y\in [0,1]$.

An $\mathcal{M}_L$-decorated graph $G=(g,\beta)$ is a pair consisting of a graph $g=(V(g),E(g))$ and a map $\beta:E(g)\to \mathcal{M}_L$. 
Given an $\mathcal{M}_L$-decorated graph $G=(g,\beta)$, one can naturally associate an $\mathcal{M}_L$-decorated graphon $w_G$ defined by
\begin{align*}
    &w_G(x,y)=\beta(i,j);\\
    &(x,y)\in \left[\frac{i-1}{n},\frac{i}{n}\right)\times \left[\frac{j-1}{n},\frac{j}{n}\right).
\end{align*}
Given an $\mathcal{M}_L$-decorated graph $F=(f,\beta)$ and an $\mathcal{M}_L$-decorated graphon $w$, we define
\begin{align}
    \rho(F,w)=\int_{[0,1]^{V(f)}}\prod_{e=(u,v)\in E(f)}\langle \beta(e),W(x_u,x_v)\rangle\,\text{d}x. \label{eq:rhomatrices}
\end{align}
When the graphon $w$ is generated by an $\mathcal{M}_L$-decorated graph $G$ i.e. $w=w_G$, the above definition reduces to 
\begin{align*}
    &\rho(F,G):=\rho(F,w_G)=\frac{\hom(F,G)}{|V(g)|^{|V(f)|}};\\
    &\hom(F,G)=\sum_{\phi:V(f)\to V(g)}\prod_{e\in E(f)}\langle \beta(e),\gamma(\phi(e))\rangle,
\end{align*}
where $\phi(e)=(\phi(u),\phi(v))$, for $e=(u,v)$. 
Similarly, we define
\begin{align*}
    &\homo_{\operatorname{inj}}(F,G)=\sum_{\phi:V(f)\hookrightarrow V(g)}\prod_{e\in E(f)}\langle \beta(e),\gamma(\phi(e))\rangle;\\
    &\rho_{\operatorname{inj}}(F,G)=\frac{\homo_{\operatorname{inj}}(F,G)}{|V(g)|^{|V(f)|}},
\end{align*}
where $\phi: V(f) \hookrightarrow V(g)$ denotes an injective map. 

For a graph $f=(V(f),E(f))$ and $l\in [L]^{V(f)}$, we define its $\mathcal{M}_L$-decorated graph $F_f^l=(f,\beta_l)$ given by
\begin{align*}
    \beta_l(u,v)=P_{l_u,l_v},
\end{align*}
where $(u,v)\in E(f)$ and $P_{i,j}$ is the canonical basis in $\mathcal{M}_L$.
For a graph $g_n=([n]\times [L],E(g_n))$, we define an $\mathcal{M}_L$-decorated graph $G_{g_n}=(V(G_{g_n}),E(G_{g_n}),\gamma)$ as follows. The graph $(V(G_{g_n}),E(G_{g_n}))$ is the complete graph $K_n$ over $n$ vertices, and for any $u,v\in [n]$, we define
\begin{align*}
    \gamma(u,v)=\left(\mathbf{1}_{((u,i),(v,j))\in E(g_n)}\right)_{i,j\in [L]}.
\end{align*}
Note that $\gamma(u,v)=\gamma(v,u)^*$. Informally, for any $i,j\in [L]$, the matrix $(\gamma_{ij}(u,v))_{u,v\in [n]}$ is a ``two-layer'' adjacency matrix of $g_n$, namely, $\gamma_{ij}(u,v)=1$ if and only if there exists an edge from $(u,i)$ to $(v,j)$. 
Since 
\begin{align*}
    \norm{\gamma(u,v)}_{\operatorname{HS}}=\norm{\gamma(u,v)^*}_{\operatorname{HS}}=\norm{\gamma(v,u)}_{\operatorname{HS}},
\end{align*}
the graphon $w_{G_{g_n}}$ is well-defined as an $\mathcal{M}_L$-decorated graphon. 
It was proved in \cite[Theorem 3.7]{lovasz2022multigraph} that, given a sequence of $\mathcal{M}_L$-decorated graphons $(w_n)_{n\geq 1}$,  if $\rho(F,w_n)$ converges as $n\to \infty$ for every $\mathcal{M}_L$-decorated graph $F$, then there exists an $\mathcal{M}_L$-decorated graphon $w$ such that $\rho(F,w_n)\to \rho(F,w)$, for every $\mathcal{M}_L$-decorated graph $F$. We denote this convergence by $w_n\to w$. Let us note that it is possible to define the cut distance $\delta_\Box$ on the space of $\mathcal{M}_L$-decorated graphons in such a way that, for any simple graph $F$, the map $w\mapsto \rho(F,w)$ is continuous for $\delta_\Box$. More precisely, set
$$\|w\|_{\Box}:=\sup_{f,g:[0,1]\to [0,1]}\left\|\int_{[0,1]^2}w(x,y) f(x)g(y)\text{d}x\text{d}y\right\|_{HS}.$$
Then,
\begin{align*}
    \delta_{\Box}(w_1,w_2):=\inf_{\phi}\norm{w_1-w_2\circ \phi}_{\Box},
\end{align*}
where the infimum is over measure-preserving transformations $\phi:[0,1]\to [0,1]$ and
\begin{align*}
    w\circ\phi (x,y):=w(\phi(x),\phi(y)).
    \end{align*}
This distance $\delta_{\Box}$ gives a criterion for the convergence $w_n\to w$. The proof of such continuity is carried out in \cite[Section 2]{lovasz2022multigraph}.

We can now state the main result of this section.
\begin{theorem}\label{theorem: multivariate case}
    Let $L\ge 1,n\ge 1$ and $g_n=([n]\times [L],E(g_n))$ be a grid graph. Assume that $w_{G_{g_n}}\to w$, where $w$ is an $\mathcal{M}_L$-decorated graphon. Let $a\in \A$ be a normalized random variable and let $(a_k^{(r)})_{k\in [n],r\in [L]}$ be $g_n$-independent and identically distributed random variable with common law $a$. Let
    \begin{align*}
        S_n^{(r)}=\frac{1}{\sqrt{n}}\sum_{k\in [n]}a_k^{(r)}.
    \end{align*}
    Then, $(S_n^{(r)})_{r\in [L]}$ converges in distribution to a family $(s_r)_{r\in [L]}$ whose joint distribution $\nu_{w}$ depends only on $w$. Moreover, for any $p\ge 1$ and $l\in [L]^p$, we have
    \begin{align*}
        \tau(s_{l_1}\cdots s_{l_p})=\sum_{\substack{\pi \in P_2(p)\\\pi\le \ker (l)}}\rho(F_{f_\pi}^l,w)=:\nu_w(l).
    \end{align*}
\end{theorem}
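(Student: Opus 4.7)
The plan is to mirror the proof of Theorem~\ref{theorem: 1}, replacing the scalar graphon framework by the $\mathcal{M}_L$-decorated one. First, I expand
\begin{align*}
    \tau\bigl(S_n^{(l_1)}\cdots S_n^{(l_p)}\bigr) = \frac{1}{n^{p/2}} \sum_{i \in [n]^p} \tau\bigl(a_{i_1}^{(l_1)}\cdots a_{i_p}^{(l_p)}\bigr)
\end{align*}
and decompose by $\pi := \ker(i) \in P(p)$. Applying Lemma~\ref{lemma: speicher} to the index sequence $\bigl((i_j, l_j)\bigr)_{j\in [p]}$ in $[n]\times [L]$, the same arguments as in the univariate case eliminate partitions having a singleton block (since $\tau(a)=0$) and partitions having a block of size at least three (their contribution is $O(n^{|\pi|-p/2}) = o(1)$). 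Hence only $\pi \in P_2(p)$ survive in the limit.

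For such a $\pi$, any $\sigma \in \NC_{\ge 2}\bigl(g_n, ((i_j, l_j))_j\bigr)$ satisfies $\sigma \le \pi$ and therefore $\sigma = \pi$; but by definition $\sigma$ also satisfies $\sigma \le \ker((i,l))$, which forces the necessary condition $\pi \le \ker(l)$. Under this constraint, for each block $V = \{u,v\}$ of $\pi$ one has $l_u = l_v$ (denote this common value $l_V$) and $(i_u, l_u) = (i_v, l_v)$, so both variables lie in the same subalgebra $\A_{(i_u, l_u)}$ and $\cumuf_2(a_{i_u}^{(l_u)}, a_{i_v}^{(l_v)}) = \var(a) = 1$. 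Therefore
\begin{align*}
    \lim_{n\to\infty} \tau\bigl(S_n^{(l_1)}\cdots S_n^{(l_p)}\bigr) = \sum_{\substack{\pi \in P_2(p)\\ \pi \le \ker(l)}} \lim_{n\to\infty} \frac{1}{n^{p/2}} \#\bigl\{i \in [n]^p : \ker(i) = \pi,\ \pi \in \NC(g_n, ((i_j, l_j))_j)\bigr\}.
\end{align*}

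Next, I interpret this cardinality as a decorated homomorphism count. Associate to each valid $i$ the injective map $\phi: V(f_\pi) \to [n]$ sending $V = \{u,v\}$ to $i_u = i_v$. The condition~\eqref{eq: def-non-crossing eps partitions} then requires that for each edge $(V_1, V_2) \in E(f_\pi)$, the pair $\bigl((\phi(V_1), l_{V_1}), (\phi(V_2), l_{V_2})\bigr)$ belong to $E(g_n)$, equivalently $\langle P_{l_{V_1}, l_{V_2}}, \gamma(\phi(V_1), \phi(V_2))\rangle = 1$. Since the decoration of $F_{f_\pi}^l$ at edge $(V_1, V_2)$ is precisely $P_{l_{V_1}, l_{V_2}}$, the counted quantity equals $\homo_{\operatorname{inj}}(F_{f_\pi}^l, G_{g_n})$, so the limit above is $\sum_{\pi \in P_2(p),\ \pi \le \ker(l)} \lim_n \rho_{\operatorname{inj}}(F_{f_\pi}^l, G_{g_n})$. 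An $\mathcal{M}_L$-decorated analog of Lemma~\ref{lemma: diff between homo and homoinj}, which follows from the same collision argument together with the uniform bounds $\|P_{i,j}\|_{\operatorname{HS}} = 1$ and $\|\gamma(u,v)\|_{\operatorname{HS}} \le L$, gives $|\rho - \rho_{\operatorname{inj}}| = O(1/n)$; combined with the continuity of $\rho(F_{f_\pi}^l, \cdot)$ with respect to $\delta_\Box$ on $\mathcal{M}_L$-decorated graphons and the hypothesis $w_{G_{g_n}} \to w$, this yields
\begin{align*}
    \lim_{n\to\infty} \rho_{\operatorname{inj}}(F_{f_\pi}^l, G_{g_n}) = \lim_{n\to\infty} \rho(F_{f_\pi}^l, w_{G_{g_n}}) = \rho(F_{f_\pi}^l, w),
\end{align*}
which gives the announced expression for $\nu_w(l)$. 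The main subtlety is bookkeeping: the constraint $\pi \le \ker(l)$, restricting which pair partitions contribute, arises naturally on the combinatorial side from the requirement that the two variables within each pair lie in the same subalgebra, and is mirrored on the decorated-graph side by the projection $P_{l_u, l_v}$ selecting the correct entry of $\gamma$ in \eqref{eq:rhomatrices}.
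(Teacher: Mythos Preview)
Your proof is correct and follows essentially the same approach as the paper's: expand the mixed moment, apply Lemma~\ref{lemma: speicher} to the index sequence $((i_j,l_j))_j$, reduce to pair partitions $\pi\le\ker(l)$ via the standard singleton/size-$\geq 3$ eliminations, identify the remaining count as $\homo_{\operatorname{inj}}(F_{f_\pi}^l,G_{g_n})$, and pass to the limit using the decorated analogue of Lemma~\ref{lemma: diff between homo and homoinj} together with the assumed graphon convergence. The only cosmetic difference is that the paper cites \cite[Lemma~2.3]{lovasz2022multigraph} for the $\rho$ vs.\ $\rho_{\operatorname{inj}}$ estimate whereas you sketch the collision bound directly; both are equivalent.
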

One particular instance of Theorem \ref{theorem: multivariate case} is the case of a lexicographical product graph. Let $g_L$ be a graph over $[L]$, and let $g'_n$ be a graph over $[n]$. We consider the lexicographical product graph $g_{n,L}=g_L\cdot g'_n$, whose vertex set is $[n]\times [L]$ and edge set
\begin{align*}
    E(g_{n,L})=\{((u,i),(v,i)): (u,v)\in E(g'_n))\}\sqcup\{((u,i),(v,j)): (i,j)\in E(g_L)\},
\end{align*}
see Figure \ref{fig: lexicographic product} for an example $n=L=3$.
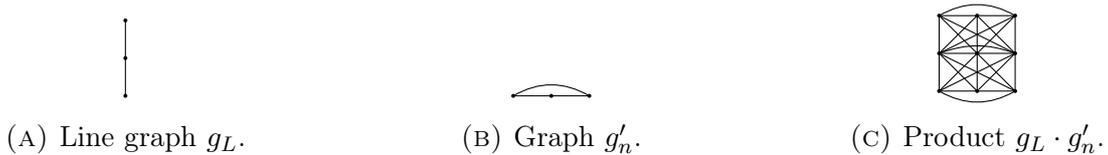
\begin{figure}[H]
     \centering
     \begin{subfigure}[b]{0.3\textwidth}
         \centering
         \begin{tikzpicture}[scale=0.50]
            \node at (0,0) (A) {};
            \fill[black] (A) circle(1.5pt);
            \node at (0,1) (B) {};
            \fill[black] (B) circle(1.5pt);
            \node at (0,2) (C) {};
            \fill[black] (C) circle(1.5pt);

            \draw[-] (0,0) edge (0,1);
            \draw[-] (0,1) edge (0,2);
        \end{tikzpicture}
        \caption{Line graph $g_L$.}
    \end{subfigure}
     \hfill
     \begin{subfigure}[b]{0.3\textwidth}
         \centering
         \begin{tikzpicture}[scale=0.50]
            \node at (0,0) (A) {};
            \fill[black] (A) circle(1.5pt);
            \node at (1,0) (B) {};
            \fill[black] (B) circle(1.5pt);
            \node at (2,0) (C) {};
            \fill[black] (C) circle(1.5pt);

            \draw[-] (0,0) edge (1,0);
            \draw[-] (1,0) edge (2,0);
            \draw[-] (0,0) edge [bend left=30] (2,0);
        \end{tikzpicture}
         \caption{Graph $g'_n$.}
     \end{subfigure}
     \hfill
     \begin{subfigure}[b]{0.3\textwidth}
         \centering
         \begin{tikzpicture}[scale=0.50]
            \node at (0,0) (A) {};
            \fill[black] (A) circle(1.5pt);
            \node at (0,1) (B) {};
            \fill[black] (B) circle(1.5pt);
            \node at (0,2) (C) {};
            \fill[black] (C) circle(1.5pt);

            \node at (1,0) (A) {};
            \fill[black] (A) circle(1.5pt);
            \node at (1,1) (B) {};
            \fill[black] (B) circle(1.5pt);
            \node at (1,2) (C) {};
            \fill[black] (C) circle(1.5pt);

            \node at (2,0) (A) {};
            \fill[black] (A) circle(1.5pt);
            \node at (2,1) (B) {};
            \fill[black] (B) circle(1.5pt);
            \node at (2,2) (C) {};
            \fill[black] (C) circle(1.5pt);

            \foreach \x in {0,1,2}
            \foreach \y in {0,1,2}
              {
                \draw[-] (\x,0) edge (\y,1);
                \draw[-] (\x,1) edge (\y,2);
              }
            \draw[-] (0,0) edge (1,0);
            \draw[-] (1,0) edge (2,0);
            \draw[-] (0,0) edge [bend right=30] (2,0);        

            \draw[-] (0,1) edge (1,1);
            \draw[-] (1,1) edge (2,1);
            \draw[-] (0,1) edge [bend left=20] (2,1);   

            \draw[-] (0,2) edge (1,2);
            \draw[-] (1,2) edge (2,2);
            \draw[-] (0,2) edge [bend left=30] (2,2);   
        \end{tikzpicture}
         \caption{Product $g_L\cdot g'_n$.}
     \end{subfigure}
        \caption{Example of lexicographical product.}
        \label{fig: lexicographic product}
\end{figure}
In this case, the variables $(a_k^{(l)})_{k\in [n]}\subset \A_l$ are $g'_n$-independent, and the subalgebras $(\A_l)_{l\in [L]}$ are $g_L$-independent. Specifying to this example, Theorem \ref{theorem: multivariate case} yields the following.
\begin{corollary}
    Let $g_L$ be a graph over $[L]$, $g'_n$ be a graph over $[n]$, and $a\in \A$ with mean zero and variance one. Assume that $g'_n\to w$. Let $(\A_l)_{l\in [L]}$ be $g_L$-independent subalgebras and $(a_k^{(l)})_{k\in [n]}\subset \A_l$ be $g'_n$-independent and identically distributed random variables with common law $a$. Let
    \begin{align*}
        S_n^{(l)}=\frac{1}{\sqrt{n}}\sum_{k\in [n]}a_k^{(l)}.
    \end{align*}
    Then, $(S_n^{(l)})_{l\in [L]}$ converges to a joint law $\nu_w$ that depends only on $g_L$ and $w$. Moreover, the law $\nu_w$ can be written as the law of $(s_l)_{l\in [L]}$, where each $s_l$ has law $\mu_{w}$ and they are $g_L$-independent.
\end{corollary}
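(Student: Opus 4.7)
The plan is to realize the corollary as a special case of Theorem~\ref{theorem: multivariate case} applied to the lexicographical product graph $g_{n,L} := g_L \cdot g'_n$. Under the corollary's hypotheses, the family $(a_k^{(l)})_{k \in [n],\, l \in [L]}$ is $g_{n,L}$-independent; this can be verified directly from the two axioms of $g_{n,L}$-independence by combining the $g_L$-independence of the subalgebras $(\A_l)_{l}$ with the $g'_n$-independence within each $\A_l$ and the explicit edge structure of the lexicographical product (an instance of the associativity of $\epsilon$-independence under lexicographic products). Theorem~\ref{theorem: multivariate case} then applies as soon as one identifies the $\mathcal{M}_L$-decorated graphon limit of $(w_{G_{g_{n,L}}})_n$.

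For this identification, I compute that $G_{g_{n,L}}$ has vertex set $[n]$ and, for $u \neq v$, edge decoration $\gamma_n(u,v) \in \mathcal{M}_L$ given by $\gamma_n(u,v)_{ii} = A_{g'_n}(u,v)$ and $\gamma_n(u,v)_{ij} = A_{g_L}(i,j)$ for $i \neq j$. Since $\langle \beta, M\rangle$ is linear in $M$, the homomorphism density~\eqref{eq:rhomatrices} decomposes, for any $\mathcal{M}_L$-decorated graph $F$, as a linear combination indexed by subsets $S$ of $E(F)$ of scalar graphon densities $\rho(f_S, w_{g'_n})$, where $f_S = (V(F), S)$; the coefficients depend only on $g_L$ and on the decoration of $F$. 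Since $g'_n \to w$ by hypothesis, each $\rho(f_S, w_{g'_n}) \to \rho(f_S, w)$, and therefore $w_{G_{g_{n,L}}} \to w^{(L)}$ for the $\mathcal{M}_L$-decorated graphon
$$w^{(L)}(x,y)_{ii} = w(x,y), \qquad w^{(L)}(x,y)_{ij} = A_{g_L}(i,j) \text{ for } i \neq j.$$
Theorem~\ref{theorem: multivariate case} then yields the convergence of $(S_n^{(l)})_{l \in [L]}$ in joint distribution to a family $(s_l)_{l \in [L]}$ with joint law $\nu_w$ depending only on $g_L$ and $w$.

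It remains to identify $\nu_w$ as the joint law of a $g_L$-independent family each with marginal $\mu_w$. The marginal statement is an immediate application of Theorem~\ref{theorem: 1} to the $g'_n$-independent, i.i.d., centered, variance-one subfamily $(a_k^{(l)})_{k \in [n]}$, which gives $S_n^{(l)} \to \mu_w$ in distribution. For the $g_L$-independence of the limit, I transfer the finite-$n$ $g_L$-independence of $(\A_l)_l$ to the limit algebras generated by $(s_l)_l$: commutativity of $s_l$ and $s_{l'}$ for $(l,l') \in E(g_L)$ follows from that of $S_n^{(l)}$ and $S_n^{(l')}$ in $\A$; and for the centering axiom, given $l \in [L]^k$ satisfying the $g_L$-alternating condition and polynomials $P_j$ with $\tau(P_j(s_{l_j}))=0$, I introduce the recentered polynomials $\tilde P_j^{(n)}(X) := P_j(X) - \tau(P_j(S_n^{(l_j)}))$, so that $\tilde P_j^{(n)}(S_n^{(l_j)})$ lies in $\A_{l_j}$ and is $\tau$-centered at finite $n$. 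The $g_L$-independence of $(\A_l)_l$ then yields $\tau\bigl(\prod_j \tilde P_j^{(n)}(S_n^{(l_j)})\bigr) = 0$; expanding the product and letting $n \to \infty$, using $\tau(P_j(S_n^{(l_j)})) \to \tau(P_j(s_{l_j})) = 0$ together with the joint moment convergence, yields $\tau\bigl(\prod_j P_j(s_{l_j})\bigr) = 0$. These two properties, combined with Lemma~\ref{lemma: speicher}, uniquely determine the joint law as that of $g_L$-independent copies of $\mu_w$.

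The main difficulty I anticipate lies in the decorated-graphon convergence step, which requires carefully separating the diagonal and off-diagonal blocks of the decorations $\gamma_n$ and leveraging the multilinearity of $\rho(F, \cdot)$ to reduce the convergence to the given scalar convergence $g'_n \to w$. The remaining two steps, namely the marginal identification via Theorem~\ref{theorem: 1} and the passage of $g_L$-independence to the limit via the recentering trick, are routine continuity arguments in noncommutative probability.
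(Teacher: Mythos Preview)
Your proof is correct and follows the same overall architecture as the paper: both apply Theorem~\ref{theorem: multivariate case} to the lexicographical product $g_{n,L}=g_L\cdot g'_n$, identify the limiting $\mathcal{M}_L$-decorated graphon as $w^{(L)}$ with $w^{(L)}_{ii}=w$ and $w^{(L)}_{ij}=A_{g_L}(i,j)$ for $i\neq j$, and then identify the limit family.

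There are two tactical differences worth noting. First, for the decorated graphon convergence the paper simply appeals to the cut-distance metric, whereas you give an explicit argument: writing $\langle\beta(e),\gamma_n(\phi(e))\rangle=c_1(e)A_{g'_n}(\phi(e))+c_2(e)$ and expanding the product over $E(F)$ reduces $\rho(F,w_{G_{g_{n,L}}})$ to a finite linear combination of scalar densities $\rho(f_S,g'_n)$, each of which converges by hypothesis. This is more self-contained. Second, and more substantively, the paper establishes $g_L$-independence of the limit by applying Lemma~\ref{lemma: speicher} \emph{at finite $n$} to the $g_L$-independent variables $S_n^{(l)}\in\A_l$, obtaining
\[
\tau\bigl(S_n^{(l_1)}\cdots S_n^{(l_p)}\bigr)=\sum_{\sigma\in\NC(g_L,l)}\prod_{V\in\sigma}\cumuf_{|V|}\bigl(S_n^{(l_{v_1})}\bigr),
\]
and then passing the free cumulants to the limit via Theorem~\ref{theorem: 1}. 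You instead verify the two defining axioms of $g_L$-independence directly in the limit, using the recentering $\tilde P_j^{(n)}=P_j-\tau(P_j(S_n^{(l_j)}))$ and the fact that the correction terms vanish as $n\to\infty$. Both arguments are valid; the paper's is somewhat shorter and exploits the moment--cumulant machinery already in place, while yours is more elementary and shows explicitly how the $\epsilon$-independence axioms transfer under distributional limits.
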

\begin{proof}
    Let $g_{n,L}=g_l\cdot g'_n$. It is immediate to check that $(a_k^{(l)})_{k\in [n],l\in [L]}$ are $g_{n,L}$-independent.
    Let $G_{g_{n,L}}=(K_n,\gamma)$. We note that
    \begin{align*}
        \gamma_{ij}(u,v)=\begin{cases}
            \mathbf{1}_{(u,v)\in E(g'_n)};&\text{ if }i=j;\\
            \mathbf{1}_{(i,j)\in E(g_L)};& \text{ otherwise}.
        \end{cases}
    \end{align*}
    By the cut-distance metric, it is easy to see that $w^{(n)}=w_{G_{g_{n,L}}}$ converges to 
    \begin{align*}
        w^*_{ij}=\begin{cases}
            w;&\text{ if }i=j;\\
            \mathbf{1}_{(i,j)\in E(g_L)};& \text{ otherwise}.
        \end{cases}
    \end{align*}
    Theorem \ref{theorem: multivariate case} implies that the joint law of $(S_n^{(l)})_{l\in [L]}$ converges towards $\nu_{w^*}$. By fixing $k\in [L]$ and $l=(k,\ldots,k)\in [L]^p$, we can see that
    \begin{align*}
        \nu_{w^*}(l)=\sum_{\pi\in P_2(p)}\int_{[0,1]^{V(f_\pi)}}\prod_{(u,v)\in E(f_\pi)} w_{kk}^*(x_u,x_v)\,\text{d}x.
    \end{align*}
    Hence, the law of each $s_k$ is given by $\mu_{w}$ as $w_{kk}^*=w$. Moreover, since $S_n^{(l)}$ belongs to $\A_l$, where $\A_l$ are $g_L$-independent subalgebras, Lemma \ref{lemma: speicher} implies that
    \begin{align*}
        \tau(S_n^{(l_1)}\cdots S_n^{(l_p)})=\sum_{\sigma \in \NC(g_L,l)}\prod_{V=\{v_1,\ldots,v_m\}\in\sigma}\cumuf_{m}(S_n^{(l_{v_1})}),
    \end{align*}
    where $m=|V|$.
    By Theorem \ref{theorem: 1} and the moment-cumulant formula, we get that $\cumuf_{m}(S_n^{(l)})\to \cumuf_{m}(s_l)$, for every $m\ge 1$. Hence
    \begin{align*}
        \tau(S_n^{(l_1)}\cdots S_n^{(l_p)})\to \sum_{\sigma \in \NC(g_L,l)}\prod_{V=\{v_1,\ldots,v_m\}\in\sigma}\cumuf_{m}(s_{l_{v_1}}).
    \end{align*}
    We deduce by Theorem \ref{theorem: multivariate case} that
    \begin{align*}
        \nu_w(l)=\lim_{n\to \infty}\tau(S_n^{(l_1)}\cdots S_n^{(l_p)})=\tau(s_{l_1}\cdots s_{l_p}),
    \end{align*}
    where $(s_l)_{l\in [L]}$ are $g_L$-independent random variables. The proof is complete.
\end{proof}

\begin{proof}[Proof of Theorem \ref{theorem: multivariate case}]
    Let $l\in [L]^p$. We aim to show that
    \begin{align*}
        \tau\left(S_n^{(l_1)}\cdots S_n^{(l_p)}\right)\to \sum_{\substack{\pi \in P_2(p)\\\pi\le \ker(l)}}\rho(F_{f_\pi}^l,w).
    \end{align*}
    By standard reductions, we can see that 
    \begin{align*}
        \lim_{n\to \infty}\tau\left(S_n^{(l_1)}\cdots S_n^{(l_p)}\right)=\sum_{\pi \in P_2(p)}\lim_{n\to \infty}\frac{1}{n^{p/2}}\sum_{\substack{i\in [n]^p\\\ker (i)=\pi}}\tau\left(a_{i_1}^{(l_1)}\cdots a_{i_p}^{(l_p)}\right).
    \end{align*}
    Let $\theta=((i_1,l_1),\ldots,(i_p,l_p))$. By Lemma \ref{lemma: speicher}, we get that
    \begin{align*}
        \tau\left(a_{i_1}^{(l_1)}\cdots a_{i_p}^{(l_p)}\right)=\sum_{\sigma \in \NC(g_n,\theta)}\cumuf_\sigma(a_{i_1}^{(l_1)},\ldots ,a_{i_p}^{(l_p)}).
    \end{align*}
    As the variables are centered, we get
    \begin{align*}
        \tau\left(a_{i_1}^{(l_1)}\cdots a_{i_p}^{(l_p)}\right)=\sum_{\sigma \in \NC_{\ge 2}(g_n,\theta)}\cumuf_\sigma(a_{i_1}^{(l_1)},\ldots ,a_{i_p}^{(l_p)}).
    \end{align*}
    Let $\sigma \in \NC_{\ge 2}(g_n,\theta)$. Any block $V=\{v_1,\ldots,v_m\}\in \sigma$ must satisfy $i_{v_1}=\cdots =i_{v_m}$ by definition of $\sigma \le \ker (\theta)$. In particular, we get that $m=2$ as $\pi$ is a pair partition and $\ker (i)=\pi$. In addition, $V=\{v_1,v_2\}\in \sigma$ if and only if $i_{v_1}=i_{v_2}$, that is, $\sigma=\ker (i)=\pi$. As $\var(a)=1$, we get that
    \begin{align*}
        \tau\left(a_{i_1}^{(l_1)}\cdots a_{i_p}^{(l_p)}\right)=\mathbf{1}_{\pi \in \NC_2(g_n,\theta)}.
    \end{align*}
    The following is equivalent to $\pi\in \NC_2(g_n,\theta)$.
    \begin{enumerate}
        \item For any block $V=\{v_1,v_2\}\in \pi$, we have $l_{v_1}=l_{v_2}$ (as $\pi\le \ker(\theta)$); and
        \item For any edge $(\{u_1,u_2\},\{v_1,v_2\})\in E(f_\pi)$, we have $((i_{u_1},l_{u_1}),(i_{v_1},l_{v_1}))\in E(g_n)$.
    \end{enumerate}
    For condition (1), we see that $\pi$ must satisfy $\pi\le \ker (l)$. Hence, for any $\pi\in P_2(p)$ such that $\pi\le \ker (l)$, we have
    \begin{align*}
        \sum_{\substack{i\in [n]^p\\\ker (i)=\pi}}\tau\left(a_{i_1}^{(l_1)}\cdots a_{i_p}^{(l_p)}\right)&=\sum_{\substack{i\in [n]^p\\\ker (i)=\pi}}\mathbf{1}_{\pi\in \NC_2(g_n,\theta)}\\
        &=\sum_{\phi: V(f_\pi)\hookrightarrow [n]} \prod_{e=(u,v)\in E(f_\pi)}\mathbf{1}_{((\phi(u),l_{u}),(\phi(v),l_{v}))\in E(g_n)},
    \end{align*}
    where $u=\{u_1,u_2\}\in \pi$ and $l_u:=l_{u_1}=l_{u_2}$ (similarly for $v\in \pi$).
    Recall the definitions of the $\mathcal{M}_L$-decorated graphs $F^{l}_{f_\pi}=(f_\pi,\beta_l)$ and $G_{g_n}=(K_n,\gamma)$. We aim to show that for any $e=(u,v)\in E(f_\pi)$, we have
    \begin{align*}
        \mathbf{1}_{((\phi(u),l_{u}),(\phi(v),l_{v}))\in E(g_n)}=\langle \beta_l(e),\gamma(\phi(e))\rangle.
    \end{align*}
    Indeed, we first have that
    \begin{align*}
        \beta_l(u,v)=P_{l_u,l_v}.
    \end{align*}
    The $l_ul_v$-entry of $\gamma(\phi(u),\phi(v))$ is given by
    \begin{align*}
        \gamma_{l_u,l_v}(\phi(u),\phi(v))=\mathbf{1}_{((\phi(u),l_u),(\phi(v),l_v))\in E(g_n)}.
    \end{align*}
    Hence
    \begin{align*}
        \langle \beta_l(e),\gamma(\phi(e))\rangle=\mathbf{1}_{((\phi(u),l_u),(\phi(v),l_v))\in E(g_n)}.
    \end{align*}
    We deduce that
    \begin{align*}
        \sum_{\substack{i\in [n]^p\\\ker (i)=\pi}}\tau\left(a_{i_1}^{(l_1)}\cdots a_{i_p}^{(l_p)}\right)=\homo_{\operatorname{inj}}(F^l_{f_\pi},G_{g_n}).
    \end{align*}
    Hence
    \begin{align*}
        \lim_{n\to \infty}\tau\left(S_n^{(l_1)}\cdots S_n^{(l_p)}\right)=\sum_{\substack{\pi \in P_2(p)\\\pi\le \ker (l)}}\lim_{n\to \infty}\rho_{\operatorname{inj}}(F^l_{f_\pi},G_{g_n}).
    \end{align*}
    By \cite[Lemma 2.3]{lovasz2022multigraph} and a similar argument done in Lemma \ref{lemma: diff between homo and homoinj}, we can bound
    \begin{align*}
        |\rho(F^l_{f_\pi},G_{g_n})-\rho_{\operatorname{inj}}(F^l_{f_\pi},G_{g_n})|\le \frac{C_f}{n}.
    \end{align*}
    Therefore, we get that $\rho_{\operatorname{inj}}(F^l_{f_\pi},G_{g_n})$ and $\rho(F^l_{f_\pi},G_{g_n})$ are indistinguishable when $n$ goes to infinity. Hence, the graphon convergence $w_{G_{g_n}}\to w$ implies that
    \begin{align*}
        \lim_{n\to \infty}\tau\left(S_n^{(l_1)}\cdots S_n^{(l_p)}\right)=\sum_{\substack{\pi \in P_2(p)\\\pi\le \ker (l)}}\rho(F^l_{f_\pi},w),
    \end{align*}
    and the result follows.
\end{proof}

\section{Graphon with value in $[-1,1]$}\label{sec: negative graphon}
In this section, we state a version of Theorem \ref{theorem: 1} where the limiting measure can be any $\mu_{w}$ with $w:[0,1]^2\to [-1,1]$, removing the restriction over the range of $w$. To this aim, we need to go beyond $\epsilon$-independence, and consider variables which are "$Q$-independent", for a symmetric matrix
$Q=(q_{ij})^n_{i,j=1}$ (the case $q_{ij} \in \{0,1\}$ being the particular case of $\epsilon$-independence). Even without a general notion of "$Q$-independence" (see~\cite{van1996obstruction}), there is a good way to consider $q$-Gaussian variables which behave as if they were "$Q$-independent": it is the notion of mixed $q$-Gaussian system, introduced by Speicher in~\cite{speicher1993generalized}. Given a symmetric matrix
$Q=(q_{ij})^n_{i,j=1}$ with $q_{ij} \in [-1, 1]$, one considers variables satisfying
$$a_i a^{*}_j-q_{ij}a^{*}_j a_i = \delta_{ij}.$$
The mixed $q$-Gaussian system is a vector of self-adjoint variables $s_j = a^{*}_
j + a_j$, whose noncommutative distribution is given by
 $$\tau[s_{i_1}\cdots s_{i_p}]= \sum_{\substack{\pi \in P_2(p)\\ \pi \leq \ker(i)}}\prod_{(u,v)\in E(f_{\pi})}q_{i_u,i_v},$$
where $f_{\pi}$ is the intersection graph of $\pi$ (see \cite{speicher1993generalized} for the introduction of the mixed $q$-Gaussian system). The graphon $w_Q:[0,1]^2\to [-1,1]$ generated by the matrix $Q$ is defined as follows: for $i,j \in [n]$ and
\begin{align*}
    x \in \left[\frac{i-1}{n},\frac{i}{n}\right),\quad y \in \left[\frac{j-1}{n},\frac{j}{n}\right),
\end{align*}
we set $w_Q(x,y)=q_{ij}$. With this definition, it follows that, for any graph $f=(V(f),E(f))$, we have 
\begin{align*}
    \rho(f,w_Q)=\sum_{i\in [n]^{V(f)}}\frac{1}{n^{|V(f)|}}\prod_{(u,v)\in E(f)}q_{j_u,j_v}.
\end{align*}
We deduce the following version of the CLT.

\begin{theorem}\label{theorem: clt for negative graphons}
    Let $(Q_n)_{n \ge 1}$ be a sequence of symmetric $n\times n$ matrices with entries in $[-1,1]$. Assume that $w_{Q_n} \to w$ in the graphon sense. Let $s_1,\ldots,s_n \in \A$ be a mixed $q$-Gaussian system, for every $n \ge 1$. Then 
    \begin{align*}
        S_n=\frac{1}{\sqrt{n}}\sum_{k \in [n]}s_k
    \end{align*}
    converges in distribution to a law $\mu_w$ that depends only on $w$. Moreover, its odd moments vanish, whereas, for any $p\in \N$, we have
    \begin{align*}
        \int x^{2p}\, \text{d}\mu_w=\sum_{\pi \in P_2(2p)}\rho(f_\pi,w).
    \end{align*}
\end{theorem}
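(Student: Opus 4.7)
The plan is to exploit the explicit moment formula for the mixed $q$-Gaussian system stated just before the theorem, which bypasses the cumulant analysis that was required in the proof of Theorem~\ref{theorem: 1} and makes the argument considerably shorter. Indeed, for the mixed $q$-Gaussian system associated to $Q_n=(q^{(n)}_{ij})$, Speicher's formula directly expresses moments as sums over pair partitions, with no contribution from partitions containing blocks of size $\neq 2$.

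Concretely, I would first expand
\begin{align*}
\tau(S_n^p)=\frac{1}{n^{p/2}}\sum_{i\in [n]^p}\tau(s_{i_1}\cdots s_{i_p})=\frac{1}{n^{p/2}}\sum_{i\in [n]^p}\sum_{\substack{\pi\in P_2(p)\\ \pi\le \ker(i)}}\prod_{(u,v)\in E(f_\pi)}q^{(n)}_{i_u,i_v},
\end{align*}
and then exchange the two sums to get
\begin{align*}
\tau(S_n^p)=\sum_{\pi\in P_2(p)}\frac{1}{n^{p/2}}\sum_{\substack{i\in [n]^p\\ \pi\le \ker(i)}}\prod_{(u,v)\in E(f_\pi)}q^{(n)}_{i_u,i_v}.
\end{align*}
For odd $p$, $P_2(p)=\varnothing$ and $\tau(S_n^p)=0$ identically, yielding the vanishing of odd moments.

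Next, I would identify the inner sum with a homomorphism density. For $\pi\in P_2(p)$, the constraint $\pi\le \ker(i)$ just says that the two elements of each block of $\pi$ receive a common index; thus specifying such an $i$ is the same as specifying an arbitrary map $j:V(f_\pi)\to [n]$ (different blocks are allowed to receive the same value). Since $|V(f_\pi)|=p/2$, the normalization $n^{-p/2}$ matches $n^{-|V(f_\pi)|}$, so using the formula for $\rho(f,w_Q)$ recalled just above the theorem,
\begin{align*}
\frac{1}{n^{p/2}}\sum_{\substack{i\in [n]^p\\ \pi\le \ker(i)}}\prod_{(u,v)\in E(f_\pi)}q^{(n)}_{i_u,i_v}=\frac{1}{n^{|V(f_\pi)|}}\sum_{j\in [n]^{V(f_\pi)}}\prod_{(U,V)\in E(f_\pi)}q^{(n)}_{j_U,j_V}=\rho(f_\pi,w_{Q_n}).
\end{align*}
Note that in contrast with the proof of Theorem~\ref{theorem: 1}, we do not have to switch between $\rho$ and $\rho_{\operatorname{inj}}$ here, since $\rho(f,w_Q)$ is by definition a sum over all (not necessarily injective) maps.

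Finally, by the assumed graphon convergence $w_{Q_n}\to w$, we obtain
\begin{align*}
\lim_{n\to\infty}\tau(S_n^p)=\sum_{\pi\in P_2(p)}\rho(f_\pi,w),
\end{align*}
which is exactly the even-moment formula defining $\mu_w$ in \eqref{eq: definition of mu_w}. To conclude that $S_n$ converges in distribution to $\mu_w$, I would invoke Carleman's condition: the trivial bound $|\rho(f_\pi,w)|\le 1$ shows that the $2p$-th moment of $\mu_w$ is at most $|P_2(2p)|=(2p)!/(2^p p!)$, which is the $2p$-th moment of the standard Gaussian and hence satisfies Carleman, so $\mu_w$ is uniquely determined by its moments. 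The main conceptual point — really the only one requiring care — is the bookkeeping identification of the restricted sum over $i$ with a sum over maps on $V(f_\pi)$; once this is in place, the argument is essentially a one-line consequence of the moment formula and graphon convergence.
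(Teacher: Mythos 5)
Your proof is correct and follows essentially the same approach as the paper: expand the moments via Speicher's mixed $q$-Gaussian formula, exchange the sums, identify the inner sum over $i$ with $\pi\le\ker(i)$ as a sum over arbitrary maps $V(f_\pi)\to[n]$ (hence exactly $\rho(f_\pi,w_{Q_n})$), and pass to the limit by graphon convergence. The only addition relative to the paper's proof is your explicit invocation of Carleman's condition to justify that $\mu_w$ is determined by its moments; this is a harmless but reasonable refinement, since the paper relies on its earlier Fock-space construction of $\mu_w$ and takes moment-determinacy for granted.
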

Note that in \cite[Theorem 2]{speicher1993generalized}, Speicher considered a sequence of symmetric $n\times n$ matrices with entries in $\{-1,1\}$ with independent and identically distributed variables (up to the symmetry). In this case, the law of large numbers implies that $w_{Q_n}$ converges almost surely in the graphon sense to a constant graphon, which allows us to recover \cite[Theorem 2]{speicher1993generalized} by using Theorem~\ref{theorem: clt for negative graphons}.
\begin{proof}
    Let $p\in \mathbb{N}$. We compute
    \begin{align*}
        \tau(S_n^p)=\frac{1}{n^{p/2}}\sum_{i\in [n]^p} \sum_{\pi \in P_2(p)}\mathbf{1}_{\pi \le \ker (i)} \prod_{(u,v)\in E(f_\pi)} q_{i_u,i_v}.
    \end{align*}
    Note that the set of $i\in [n]^p$ such that $\pi\le \ker (i)$ is in a one-to-one with the set of all maps $\phi: V(f_\pi)\to [n]$. Hence, by definition of $\rho(f,{w_{Q_n}})$, we get
    \begin{align*}
        \tau(S_n^p)=\sum_{\pi\in P_2(p)} \rho(f_\pi,w_{Q_n}).
    \end{align*}
    The result follows by the graphon convergence $w_{Q_n} \to w$.
\end{proof}

\section{Examples}\label{sec: examples}

The goal of this section is to provide concrete examples illustrating the wide range of applicability of Theorem \ref{theorem: 1}, as well as the variety of limiting measures appearing in this central limit theorem. 
The free central limit theorem, when the graph of independence is edgeless, stipulates that the semi-circle law appears as the limit. 
We start by checking that the limiting behavior is unaffected as long as the graphs of independence are sparse enough. This follows from the universality of Theorem~\ref{theorem: 1} and the fact that the graphon limit of sparse graphs coincides with that of edgeless graphs. 
\begin{example}\label{ex: semi-circle}
    Let $g_n$ be a sequence of graphs and suppose that the maximum degree $\Delta_n$ of $g_n$ grows sublinearly, i.e.,
    \begin{align*}
        \lim_{n\to \infty} \frac{\Delta_n}{n}=0.
    \end{align*}
    Let $a_1,\ldots, a_n \in \A$ be $g_n$-independent identically distributed random variables with mean zero and variance one. Then 
    \begin{align*}
        S_n=\frac{1}{\sqrt{n}}\sum_{k \in [n]}a_k
    \end{align*}
    converges in distribution to the semi-circle law.
\end{example}
\begin{proof}
    Note first that if $f$ is a connected graph with $k$ vertices, then
    \begin{align*}
        \homo(f,g_n) \le n\Delta_n^{k-1}.
    \end{align*}
    We then have
    \begin{align*}
        \rho(f,g_n)=\frac{\homo(f,g_n)}{n^{k}} \le \left(\frac{\Delta_n}{n}\right)^{k-1}
    \end{align*}
    By assumption, whenever $f$ has more than one vertex, we have $\rho(f,g_n) \to 0$. Therefore, $g_n$ converges to the graphon $w \equiv 0$. Now, let $\pi \in P_2(p)$ be a pair partition. Denote $\pi_1,\ldots,\pi_k$ its connected components. Since $\rho(f,g_n)$ is a multiplicative function, i.e.,
    \begin{align*}
        \rho(f_\pi,g_n)=\prod_{l \in [k]}\rho(f_{\pi_l},g_n),
    \end{align*}
    we have that whenever $\pi$ has an edge, $\rho(f_\pi,g_n) \to 0$. In other words, only noncrossing partitions contribute. For $\pi \in NC_2(p)$, we get $\rho(f_\pi,g_n)=1$ and the result follows.
\end{proof}

The classical central limit theorem, when the graph of independence is complete, stipulates that the Gaussian law appears as the limit. As mentioned above, this limiting behavior is unaffected as long as the graphs of independence are sufficiently dense.

\begin{example}\label{ex: gaussian}
  Let $g_n$ be a sequence of graphs and suppose that the minimum degree $\delta_n$ of $g_n$ satisfies
    \begin{align*}
       \lim_{n\to \infty}\frac{\delta_n}{n} =1 
    \end{align*}
    Let $a_1,\ldots, a_n \in \A$ be $g_n$-independent identically distributed random variables with mean zero and variance one. Then 
    \begin{align*}
        S_n=\frac{1}{\sqrt{n}}\sum_{k \in [n]}a_k
    \end{align*}
    converges in distribution to the Gaussian law.
\end{example}
\begin{proof}
    Let $f$ be a connected graph with $k$ vertices. Then, the definition of the minimum degree $\delta_n$ implies that
    \begin{align*}
        \hom(f,g_n) \ge n\delta_n^{k-1}.
    \end{align*}
    We deduce that
    \begin{align*}
        1\ge \rho(f,g_n) \ge \left(\frac{\delta_n}{n}\right)^{k-1} \to 1.
    \end{align*}
    By multiplicativity, $\rho(f,g_n) \to 1$ for all simple graphs $f$. Hence, $g_n\to w$, where $w\equiv 1$ is the constant graphon. We can easily see that
    \begin{align*}
        \int x^{2p}\,\text{d}\mu_w=\sum_{\pi\in P_2(2p)}1=|P_2(2p)|,
    \end{align*}
    so $\mu_w$ is the Gaussian law, and the proof is complete.
\end{proof}

Theorem~\ref{theorem: 1} also allows us to capture the $q$-Gaussian laws \eqref{eq: q-gaussian def}, recovering a result in~\cite{mlotkowski2004lambdafree}.
\begin{example}\label{example: q-Gaussian}
    Let $q \in (0,1)$. Let $g_n$ be a sequence of graphs converging to the graphon $w(x,y)=q$, for all $x \ne y$. Let $a_1,\ldots, a_n \in \A$ be $g_n$-independent identically distributed random variables with mean zero and variance one. Then 
    \begin{align*}
        S_n=\frac{1}{\sqrt{n}}\sum_{k \in [n]}a_k
    \end{align*}
    converges to the $q$-Gaussian law.  
\end{example}
\begin{proof}
This follows automatically since  by the definition of $\rho(f,w)$, for all graphs $f$, we have $
        \rho(f,w)=q^{|E(f)|}.$
\end{proof}

One particular feature of Theorem \ref{theorem: 1} is that it captures some exotic distributions as a universal limit in a central limit theorem for some $\epsilon$-independent random variables. Let $+$ be the classical convolution and $\boxplus$ be the free convolution. Let $\mu_{sc},\mu_g$ be the standard semi-circle and Gaussian law, respectively, and let $t\mu$ denote the dilation of $\mu$ by $t$, 
\begin{align*}
    t\mu(A):=\mu(t^{-1}A),
\end{align*}
for all measurable sets $A$.

\begin{example}
The following laws 
  $$
\text{(1) $\frac{1}{\sqrt{2}}(\mu_{sc}+\mu_{sc})$; \quad (2) $\frac{1}{\sqrt{2}} (\mu_g+\mu_{sc})$;\quad (3) $\frac{1}{\sqrt{2}}(\mu_g\boxplus\mu_g)$;\quad (4) $\frac{1}{\sqrt{2}}(\mu_g\boxplus\mu_{sc})$}
$$ 
coincide with $\mu_w$ where, following the same order, $w$ is the graphon given by 
\begin{align*}
   &\text{(1) } \mathbf{1}_{[0,1/2)\times [1/2,1]}+\mathbf{1}_{[1/2,1]\times[0,1/2)}; \quad 
   \text{(2) } 1-\mathbf{1}_{[1/2,1]\times [1/2,1]}\\
   &\text{(3) } \mathbf{1}_{[0,1/2)\times [0,1/2)}+\mathbf{1}_{[1/2,1]\times [1/2,1]};\quad 
   \text{(4) } \mathbf{1}_{[0,1/2)\times [0,1/2)}.
\end{align*}
\end{example}
\begin{proof}
    This follows by Proposition \ref{proposition: mu_w is closed under cla/free} and the construction in Remark \ref{remark: construction convolution}.
\end{proof}
To conclude, we give some nontrivial measures associated with negative graphons.
\begin{example}The following laws
\begin{align*}
   &\text{(1) Rademacher law } \frac{1}{2}(\delta_{-1}+\delta_{+1});\\
   &
   \text{(2) Arcsine law } \frac{1}{\pi}\frac{1}{\sqrt{2-x^2}}\mathbf{1}_{\left[-\sqrt{2},\, \sqrt{2}\right]}\,\text{d}x;\\
   &\text{(3) Kesten-McKay law } \frac{1}{2\pi}\frac{\sqrt{4(1-1/2d)-x^2}}{1-x^2/2d}\mathbf{1}_{\left[-2\sqrt{1-1/2d},\,2\sqrt{1-1/2d}\right]}\,\text{d}x
\end{align*}
coincide with $\mu_w$ where, following the same order, $w$ is the graphon given by 
\begin{align*}
   &\text{(1) } w(x,y)=-1; \\
   & \text{(2) } w(x,y)=-\left(\mathbf{1}_{[0,1/2)\times [0,1/2)}(x,y)+\mathbf{1}_{[1/2,1]\times [1/2,1]}(x,y)\right);\\
   &\text{(3) } w(x,y)=-\sum_{i=1}^d\mathbf{1}_{[(i-1)/d,i/d)\times [(i-1)/d,i/d)}(x,y).
\end{align*}
\end{example}
\begin{proof}
The first case follows from the fact that the Rademacher law is the $q$-Gaussian law $\mu_q$ for $q=-1$. Note that the arcsine law can be written as $\frac{1}{\sqrt{2}}(\mu_{-1}\boxplus \mu_{-1})$, and more generally, the Kesten-McKay law can be written as $\frac{1}{\sqrt{d}}(\mu_{-1})^{\boxplus d}$; see \cite[Example 12.8, Exercise 12.21]{nica2006lectures}.
   We conclude by Proposition \ref{proposition: mu_w is closed under cla/free} and Remark \ref{remark: construction convolution}.
\end{proof}

\bibliographystyle{abbrvnat}
\bibliography{references.bib}

\end{document}